\newtheorem{remark}{\bf Remark}
\newtheorem{lemma}{\bf Lemma}
\newtheorem{theorem}{\bf Theorem}
\numberwithin{equation}{section}
\numberwithin{definition}{section}
\numberwithin{lemma}{section}
\numberwithin{remark}{section}
\numberwithin{corollary}{section}
\numberwithin{theorem}{section}
\numberwithin{example}{section}
\begin{document}

\title{An abstract inf-sup problem inspired by limit analysis in perfect plasticity and related applications}
\author{S. Sysala$^1$\footnote{corresponding author, email: \texttt{stanislav.sysala@ugn.cas.cz}}, J. Haslinger$^{1}$, B. D. Reddy$^{2}$, S. Repin$^{3,4}$ \\ \\
\small$^1$Institute of Geonics of the Czech Academy of Sciences, Ostrava, Czech Republic\\
\small$^2$University of Cape Town, South Africa\\
{\small$^3$V.A. Steklov Institute of Mathematics at	St. Petersburg, Russia}\\
\small$^4$University of Jyv\"{a}skyl\"a, Finland}

\maketitle

\begin{abstract}
This work is concerned with an abstract inf-sup problem generated by a bilinear Lagrangian and convex constraints. We study the conditions that guarantee no gap between the inf-sup and related sup-inf problems. The key assumption introduced in  the paper generalizes the well-known Babu\v ska-Brezzi condition. It is based on an inf-sup condition defined for convex cones in function spaces. We also apply a regularization method convenient for solving the inf-sup problem and derive a computable majorant of the critical (inf-sup) value, which can be used in a posteriori error analysis of numerical results. Results obtained for the abstract problem are applied to continuum mechanics. In particular, examples of limit load problems and similar ones arising in classical plasticity, gradient plasticity and delamination are introduced. 	
\end{abstract}

{\bf Keywords:}  convex optimization, duality, inf-sup conditions on cones, regularization, computable majorants,  plasticity, delamination, limit analysis



\section{Introduction}
\label{sec_intro}

This paper is concerned with analysis of the abstract duality problem
\begin{equation}
\lambda^*:=\sup_{x\in P}\inf_{\substack{y\in Y\\ L(y)=1}}\ a(x,y) \stackrel{?}{=}\inf_{\substack{y\in Y\\ L(y)=1}}\sup_{x\in P}\ a(x,y)=:\zeta^*,
\label{duality_problem}
\end{equation}
where $P\subset X$ is a closed, convex set with $0_X\in P$, $X,Y$ are Banach spaces, $L$ is a non-trivial continuous linear functional in $Y$, and $a\colon X\times Y\rightarrow \mathbb R$ is a bilinear form continuous with respect to both arguments. Henceforth the problem in the right hand side of \eqref{duality_problem} is called primal, while the one in the left hand side is called dual. It is easy to check that $0\leq\lambda^*\leq\zeta^*\leq+\infty$. In general, necessary and sufficient conditions for $\lambda^*=\zeta^*$ are unknown (therefore, (\ref{duality_problem}) uses the symbol $\stackrel{?}{=}$). One of our main goals is to identify cases where (\ref{duality_problem}) holds as the equality.

Problem (\ref{duality_problem}) and similar problems appear in various applications, from mechanics to economics  \cite{ET74,Ch80,BBF13}. In finite dimensions, minimax and maximin variants of these problems are known in game theory \cite{Myerson_2013} and linear, cone or convex programming \cite{Dantzig_1998, Boyd_2004, Nocedal_2006, Kanno_2011}. 
	
In classical elastic-perfect plasticity, (\ref{duality_problem}) is known as the \textit{limit analysis problem}. In this case, $\lambda^*$ is the factor that determines the critical load $\lambda^*L$ ($L$ is a linear functional associated with external loads), subject to the constraint set $P$ of plastically admissible stresses; see for example \cite{J76, Ch80, T85, CL90, Ch96, RS95, Sl13, RSH18, HRS19}. For the load $\lambda L$ with $\lambda>\zeta^*$, no solution of the primal and dual problems exists; the body is unable to sustain the loading and collapses. Also, we note the similarity between (\ref{duality_problem}) and the \textit{shakedown analysis problem} (see \cite{Zouain_2018} and the references therein). 

Although the limit analysis problem has been studied for several decades, it is still unsolved in the general setting and presents  a challenging problem from the theoretical and numerical points of view. There are several reasons that stimulate further analysis of the problem. First, we notice that the equality $\lambda^*=\zeta^*$ can be analyzed in a rather general framework introduced in \cite{ET74} or by using particular results from \cite{Ch80, T85, HRS19}. However, these results do not cover any interesting cases. Second, additional and hidden constraints appear in the primal and dual problems (that follow from their inf- and sup-definitions). They often make the numerical analysis difficult. The third reason is related to the choice of the function spaces $X$ and $Y$. This question becomes especially important if the primal problem is related to minimization of a functional with linear growth at infinity and a certain problem relaxation must be done to find a minimizer (see e.g. \cite{J76,T85,RS95,Ch96}). Then we arrive, for example, at a formulation in which the $BD$- or $BV$- spaces of functions of bounded deformation and bounded variation, respectively, are appropriate for the problem setting \cite{T85}. Nevertheless, standard Sobolev spaces seem to be sufficient or even more appropriate for analysis of numerical errors \cite{Repin2010, RSH18, HRS19}. Finally, reliable estimates of $\lambda^*$ and $\zeta^*$ are often required because they define safety factors of structures. Lower bounds of $\lambda^*$ and upper bounds of $\zeta^*$ can be found by analytical approaches for specific geometries \cite{CL90} or, more generally, by finite element methods; see \cite{Sl13} and the references therein. Computable majorants of $\zeta^*$ can be found in recent papers \cite{RSH18, HRS19}.

In order to investigate the abstract problem (\ref{duality_problem}), we use the ideas applied in \cite{HRS15, HRS16, RSH18, HRS19} for analysis of limit load problems. This extension is not always straightforward and requires innovative techniques. In particular, we derive conditions for the equality $\lambda^*=\zeta^*$ to hold, the existence of a solution to the dual problem in (\ref{duality_problem}), a regularization method for solving (\ref{duality_problem}) with related convergence results, and a computable majorant of $\zeta^*$, which can be used for a posteriori analysis
of numerical results. 

One of the key assumptions in the results presented is the so-called \textit{inf-sup condition on convex cones} which was introduced in \cite{HRS19}. This condition generalizes the Babu\v ska-Brezzi condition defined on function spaces \cite{Babuska_1971, Brezzi_1974}. Conditions of this type are important for analysis of saddle point problems generated by various mixed finite element approximations \cite{BBF13}.

Generalization and abstraction of results is a basic procedure that allows results and insights in a particular application to be applied to broad classes of problems. In our case, we show that the results presented here are useful in problems of gradient-enhanced plasticity and in delamination problems. We choose the strain gradient model studied in \cite{Reddy_etal2008, Reddy2011a, CEMRS17, RS20} and use (\ref{duality_problem}) for the description of a global yield surface and for limit load analysis. In related work, limit analysis has been considered for a model in which size-dependence is through the gradient of a scalar function of plastic strain, see \cite[Section 7]{Fleck-Willis2009} or \cite{Polizzotto2010}. One can expect further applications of the problem (\ref{duality_problem}), at least within nonlinear mechanics.

The rest of the paper is organized as follows. In Section \ref{sec_analysis}, we introduce the primal and dual problems, discuss them in more detail, and present criteria ensuring their solvability and the principal duality relation $\lambda^*=\zeta^*$. One of the criteria is based on the inf-sup condition on convex cones. The proof of this new result is carried out in Section \ref{sec_proof} and its extensions are studied in Section \ref{sec_extension}. Section \ref{sec_regularization} is devoted to a regularization of the problem (\ref{duality_problem}). The regularized problem provides a lower and sufficiently sharp bound of $\lambda^*$, reduces the constraints in the dual problem, and thus it is convenient for numerical solution. In Section \ref{sec_majorant}, a computable majorant of the quantity $\zeta^*$ is derived. Section \ref{sec_examples} contains particular examples of the abstract problem (\ref{duality_problem}), including classical and strain-gradient plasticity and a delamination problem.


\section{The primal and dual problems and duality criteria}
\label{sec_analysis}

First, we recapitulate the basic assumptions used in the problem (\ref{duality_problem}):
\begin{itemize}
	\item[(A1)] $X,Y$ are two Banach spaces equipped with the norms $\|.\|_X$ and $\|.\|_Y$, respectively. The corresponding dual spaces are denoted by $X^*$ and $Y^*$;
	\item[(A2)] $a\colon X\times Y\rightarrow \mathbb R$ is a continuous bilinear form; 
	\item[(A3)] $L\colon Y\rightarrow \mathbb R$ is a non-trivial continuous linear functional (i.e., $L\neq0$ in $Y^*$);
	\item[(A4)] $P\subset X$ is a nonempty, closed and convex set with $0_X\in P$.
\end{itemize}

The primal problem in (\ref{duality_problem}) reads
\begin{equation}
\zeta^*=\inf_{\substack{y\in Y\\ L(y)=1}}\sup_{x\in P}\ a(x,y)=\inf_{\substack{y\in Y\\ L(y)=1}} \mathcal J(y),
\label{dual_problem}
\end{equation}
where 
\begin{equation}
\mathcal J\colon Y\rightarrow \mathbb R\cup\{+\infty\},\qquad \mathcal J(y):=\sup_{x\in P}\ a(x,y),\qquad y\in Y.
\label{J0}
\end{equation}
The functional $\mathcal J$ is convex, proper and 1-positively homogeneous. In addition, the effective domain $\mathrm{dom}\,\mathcal J$ is a convex cone; see Section \ref{sec_majorant} for more details. We shall assume that all cones considered in the text have a vertex at zero, so henceforth do not emphasize this property. We say that the problem \eqref{dual_problem} has a solution if the functional $\mathcal J$ has a minimizer in the feasible set $\mathrm{dom}\,\mathcal J\cap\{y\in Y\ |\; L(y)=1\}$. Using the positive homogeneity of $\mathcal J$, we obtain the following useful and equivalent definition of $\zeta^*$:
\begin{equation}
\zeta^*=\sup\{\lambda\in\mathbb R_+\ |\;\;\mathcal J(y)-\lambda L(y)\geq0\;\;\forall y\in Y\}.
\label{dual_problem1}
\end{equation}

To rewrite the dual problem in (\ref{duality_problem}) we define the functional
\begin{equation}
\mathcal I(x):=\inf_{\substack{y\in Y\\ L(y)=1}}\ a(x,y)=\left\{\begin{array}{cc}
\lambda, & \exists\lambda\in\mathbb R:\;\; a(x,y)=\lambda L(y)\;\;\forall y\in Y,\\
-\infty, & \mbox{otherwise},
\end{array}
\right.\quad x\in X,
\label{I}
\end{equation}
and the related set
\begin{equation}
\Lambda_\lambda:=\{x\in X\ |\;\; a(x,y)=\lambda L(y)\;\;\forall y\in Y\}.
\label{Lambda}
\end{equation}
Then, we have
\begin{equation}
\lambda^*=\sup_{x\in P}\inf_{\substack{y\in Y\\ L(y)=1}}\ a(x,y)=\sup_{x\in P}\, \mathcal I(x)=\sup\{\lambda\in\mathbb R_+\ |\;\; P\cap \Lambda_\lambda\neq\emptyset\}.
\label{primal_problem}
\end{equation}
We shall say that the problem \eqref{primal_problem} has a solution if $\lambda^*<+\infty$ and there exists $\bar x\in P\cap\Lambda_{\lambda^*}$.

Now, we present three different results ensuring the equality $\lambda^*=\zeta^*$ and the existence of primal or dual solutions. The first result follows from \cite[Proposition VI.2.3 and Remark VI.2.3]{ET74}.

\begin{theorem}
Let (A1)--(A4) be satisfied and assume in addition that 
\begin{itemize}
	\item[(B)] $P$ is a bounded set in $X$.
\end{itemize}
Then $\lambda^*=\zeta^*$ and the dual problem (\ref{primal_problem}) has a solution.
\label{theorem_duality1}
\end{theorem}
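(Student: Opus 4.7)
The plan is to invoke a classical minimax theorem of Ekeland-Temam type to interchange sup and inf in (\ref{duality_problem}), and then to extract the existence of a dual maximizer from upper semicontinuity of $\mathcal I$ on a compact set. Since $a$ is bilinear, both $a(\cdot,y)$ and $a(x,\cdot)$ are linear and strongly continuous, so the concavity-convexity and semicontinuity hypotheses of such theorems are automatic. The only nontrivial ingredient to supply is a suitable compactness property of $P$.

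First I would equip $X$ with its weak topology. Because $P$ is convex and norm-closed, Mazur's theorem gives weak closedness; in a reflexive $X$ the boundedness assumption (B) then yields weak compactness directly, and in the general Banach setting one follows Ekeland-Temam's framing of limit analysis by viewing $P$ inside a dual space with the weak-$*$ topology, whereupon Banach-Alaoglu supplies compactness of norm-bounded convex sets. In either case $a(\cdot,y)$ remains continuous, since a norm-continuous linear functional is automatically weakly (resp. weak-$*$) continuous. The affine constraint $\{y\in Y:L(y)=1\}$ is convex, which is all that is required on the $Y$-side. Applying Proposition VI.2.3 of Ekeland-Temam then delivers
\[
\sup_{x\in P}\,\inf_{L(y)=1} a(x,y)\;=\;\inf_{L(y)=1}\,\sup_{x\in P} a(x,y),
\]
which is precisely $\lambda^*=\zeta^*$. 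Existence of $\bar x\in P\cap\Lambda_{\lambda^*}$ follows from Remark VI.2.3, or directly from the observation that $\mathcal I$, as an infimum of a family of linear functionals, is concave and upper semicontinuous on the (weakly) compact convex set $P$ and therefore attains its maximum; by (\ref{I}) this maximum is nothing but $\lambda^*$, and the maximizer lies in $\Lambda_{\lambda^*}$ by construction.

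The main obstacle is purely topological hygiene: under (A1)--(A4) and (B) alone, one must actually specify a topology in which $P$ is compact while $a$ retains the needed separate continuity. For the mechanics applications of interest later in the paper, $X$ will typically be reflexive (often a Hilbert space of stresses), so the weak-topology route is clean; in a general Banach setting the appeal to Ekeland-Temam's treatment of limit analysis handles this point transparently, which is why the theorem is formulated as a direct consequence of that reference.
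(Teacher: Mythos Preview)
Your proposal is correct and takes essentially the same approach as the paper: the paper does not give its own proof of this theorem but simply states that it follows from Proposition~VI.2.3 and Remark~VI.2.3 of Ekeland--Temam, which is exactly the reference you invoke. Your added discussion of the weak/weak-$*$ topology needed to realize the compactness hypothesis, and of the upper semicontinuity argument for the existence of a maximizer, merely fills in what the paper leaves implicit in the citation.
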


Unfortunately, the set $P$ can be unbounded in plasticity and other applications. Therefore, we also need other criteria. The second result has been introduced in \cite[Theorem 2.1]{Ch80} and also used in \cite[Theorem 5.7]{Ch96}. It is convenient for use with non-reflexive spaces such as $L^\infty$.
\begin{theorem}
Let (A1)--(A4) be satisfied together with the following:
\begin{itemize}
	\item[$(C1)$] $P$ has a non-empty interior in $X$;
	\item[$(C2)$] There exists $x_0\in X$ such that $a(x_0,y)=L(y)$ for any $y\in Y$;
	\item[$(C3)$] For any $M\in X^*$ such that
	$$\left\{
	\begin{array}{l}
	\inf_{x\in P}\limits M(x)>-\infty,\\[3mm]
	a(x,y)=0\;\;\forall y\in Y\;\;\Longrightarrow M(x)=0,
	\end{array}
	\right.$$
    there exists $y_0\in Y$ satisfying $a(x,y_0)=M(x)$ for any $x\in X$.
\end{itemize}
Then $\lambda^*=\zeta^*$ and the primal problem has a solution. 
\label{theorem_duality2}
\end{theorem}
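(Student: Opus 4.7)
My plan is to prove $\lambda^*=\zeta^*$ by contradiction and to extract the primal minimizer from the same construction, using geometric Hahn--Banach separation together with the representation condition (C3). Suppose $\lambda^*<\zeta^*$ and fix $\lambda\in(\lambda^*,\zeta^*)$. By \eqref{primal_problem} we have $P\cap\Lambda_\lambda=\emptyset$, while $\Lambda_\lambda$ is nonempty (it contains $\lambda x_0$ by (C2)), closed (as the intersection of the closed hyperplanes $\{x:a(x,y)=\lambda L(y)\}$), and an affine translate of the closed subspace $\Lambda_0:=\{x\in X:a(x,\cdot)=0\}$.

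Since $P$ has nonempty interior by (C1) and is disjoint from the convex set $\Lambda_\lambda$, the geometric Hahn--Banach theorem furnishes $M\in X^*\setminus\{0\}$ and $\alpha\in\mathbb R$ with $M(x)\le\alpha\le M(z)$ for every $x\in P$, $z\in\Lambda_\lambda$. Boundedness from below on the affine set $\Lambda_\lambda$ forces $M$ to be constant there, so $M$ annihilates $\Lambda_0$ and $\alpha\le\lambda M(x_0)$. Applying (C3) to $-M$ (which is bounded below on $P$ and vanishes on $\Lambda_0$) produces $y_0\in Y$ with $a(\cdot,y_0)=-M$; combined with (C2) this gives $M(x_0)=-L(y_0)$, and the separation inequality rewrites as $a(x,y_0)\ge\lambda L(y_0)$ for every $x\in P$. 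Setting $x=0\in P$ forces $L(y_0)\le 0$.

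In the nondegenerate case $L(y_0)<0$, the element $\tilde y:=y_0/L(y_0)$ satisfies $L(\tilde y)=1$, and dividing the displayed inequality by the negative number $L(y_0)$ flips it to $a(x,\tilde y)\le\lambda$ on $P$, whence $\mathcal J(\tilde y)\le\lambda$. Combined with the general inequality $\mathcal J(\tilde y)\ge\zeta^* L(\tilde y)=\zeta^*>\lambda$ (which follows from the closedness of the admissible set in \eqref{dual_problem1}), this is a contradiction, proving $\lambda^*=\zeta^*$. Running the same construction at $\lambda=\zeta^*$, which is still valid whenever $P\cap\Lambda_{\zeta^*}=\emptyset$, produces a feasible $\tilde y$ with $\mathcal J(\tilde y)=\zeta^*$, the desired primal minimizer. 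The principal obstacle I anticipate is the degenerate subcase $L(y_0)=0$: the separating hyperplane then misses the direction $x_0$ representing $L$, and the rescaling collapses. I would overcome it by sharpening the separation step, e.g.\ by separating $P-\lambda x_0$ from $\Lambda_0$ and using $\lambda x_0\notin P$ for $\lambda>\lambda^*$ to enforce $M(x_0)\ne 0$, or alternatively by passing to a sequence $\lambda_n\uparrow\zeta^*$ along which the generic case prevails and extracting the primal minimizer from a suitable weak-$*$ limit of the corresponding $\tilde y_n$.
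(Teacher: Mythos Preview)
The paper does not actually prove Theorem~\ref{theorem_duality2}: it is stated and attributed to \cite[Theorem 2.1]{Ch80} (and \cite[Theorem 5.7]{Ch96}), so there is no in-paper argument to compare against. Your Hahn--Banach separation route is the natural one and is in the spirit of the cited references: separate $P$ (with nonempty interior, by (C1)) from the affine set $\Lambda_\lambda$ (nonempty by (C2)), force the separating functional to vanish on $\Lambda_0$, and then invoke (C3) to represent it by some $y_0\in Y$.

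There is, however, a genuine gap, and it is precisely the one you flag. In the degenerate case $L(y_0)=0$ (equivalently $M(x_0)=0$) your two proposed remedies do not work as written:
\begin{itemize}
\item Separating $P-\lambda x_0$ from $\Lambda_0$ yields exactly the same conclusion $M(x)\le\lambda M(x_0)$ for $x\in P$ and does \emph{not} force $M(x_0)\neq0$. Knowing $\lambda x_0\notin P$ only says that the point $\lambda x_0$ lies on the $\Lambda$-side of the hyperplane, which is perfectly compatible with $M(\lambda x_0)=0=\alpha$.
\item The sequential argument invokes a ``weak-$*$ limit'' in $Y$, but $Y$ is an arbitrary Banach space (no reflexivity, no predual is assumed), so no compactness is available. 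Moreover, nothing guarantees that the nondegenerate case occurs for \emph{any} $\lambda\in(\lambda^*,\zeta^*)$: the separating functional furnished by Hahn--Banach could satisfy $M(x_0)=0$ for every such $\lambda$.
\end{itemize}
A related loose end concerns existence: your argument at $\lambda=\zeta^*$ covers only the case $P\cap\Lambda_{\zeta^*}=\emptyset$. A cleaner way to unify both parts is to separate $P$ from the convex set $K=\bigcup_{\mu>\zeta^*}\Lambda_\mu=\Lambda_0+(\zeta^*,\infty)x_0$, which is disjoint from $P$ as soon as $\lambda^*\le\zeta^*$ and yields directly (in the nondegenerate case) a feasible $\tilde y$ with $\mathcal J(\tilde y)\le\zeta^*$. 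But this, too, leaves the degenerate case open, so the core difficulty is really the one you isolated; to complete the proof you would need to consult the argument in \cite{Ch80} or supply an independent device that rules out $M(x_0)=0$.
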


The third result is inspired by \cite[Theorem 5.2]{HRS19}. It is convenient for analysis on reflexive Banach spaces. This result is new and will be proven in the next section. 
\begin{theorem}
	Let (A1)--(A4) be satisfied and, in addition, assume the following:
	\begin{itemize}
		\item[$(D1)$] $X$ is a reflexive Banach space;
		\item[$(D2)$] $Y$ is a Hilbert space with a scalar product $(.,.)_Y$ and the induced norm $\|.\|_Y$;
		\item[$(D3)$] For any $x\in P$ there exist $x_A\in P_A$ and $x_C\in P_C$ such that $x=x_A+x_C$, where $P_A\subset X$ is closed, convex and bounded and $P_C\subset P$ is a closed convex cone; 
		\item[$(D4)$] 
		\begin{equation}
		\inf_{\substack{x_C\in P_{C}\\ x_C\neq0_X}}\ \sup_{\substack{y\in Y\\ y\neq0_Y}}\ \frac{a(x_C,y)}{\|x_C\|_X\|y\|_Y} =c_*>0.
		\label{inf-sup_abstract}
		\end{equation}
	\end{itemize}
	Then $\lambda^*=\zeta^*$. Moreover, if $\lambda^*<+\infty$ then the dual problem (\ref{primal_problem}) has a solution.
	\label{theorem_duality3}
\end{theorem}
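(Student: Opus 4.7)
Weak duality (observed just after \eqref{duality_problem}) gives $\lambda^*\le\zeta^*$, so my task reduces to establishing $\zeta^*\le\lambda^*$ together with, when $\zeta^*<+\infty$, an $\bar x\in P\cap\Lambda_{\zeta^*}$. My plan is to regularize the primal in the Hilbert variable $Y$: for $\epsilon>0$ set
\begin{equation*}
\zeta_\epsilon^* := \inf_{L(y)=1}\Bigl[\mathcal J(y) + \tfrac{\epsilon}{2}\|y\|_Y^2\Bigr].
\end{equation*}
Since $\mathcal J$ is the support function of $P$, it is convex, proper, weakly l.s.c.\ and nonnegative (using $0_X\in P$). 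The regularized functional is strictly convex, coercive and weakly l.s.c.\ on the closed affine hyperplane $\{L=1\}$, so the Hilbert framework delivers a unique minimizer $y_\epsilon$ and a Lagrange multiplier $\lambda_\epsilon\in\mathbb R$ with
\begin{equation*}
\lambda_\epsilon L-\epsilon(y_\epsilon,\cdot)_Y\in\partial\mathcal J(y_\epsilon)\subset\overline{\{a(x,\cdot)\mid x\in P\}}.
\end{equation*}
A first use of (D4) is to realize this subgradient as an $a(x_\epsilon,\cdot)$ with $x_\epsilon\in P$: any element of the closure is a weak limit of some $a(x_n,\cdot)$, $x_n\in P$, and the decomposition $x_n=x_{A,n}+x_{C,n}$ from (D3), combined with (D4) applied to $x_{C,n}\in P_C$, bounds $\{x_n\}$ in $X$; weak closedness of $P$ then supplies $x_\epsilon$ and the KKT identity $a(x_\epsilon,y)+\epsilon(y_\epsilon,y)_Y=\lambda_\epsilon L(y)$ for all $y\in Y$. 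Testing with $y=y_\epsilon$ gives $\lambda_\epsilon=\zeta_\epsilon^*+\tfrac{\epsilon}{2}\|y_\epsilon\|_Y^2$.

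Next come the a priori estimates. When $\zeta^*<+\infty$, choose a competitor $y_0$ with $L(y_0)=1$ and $\mathcal J(y_0)<+\infty$; then $\zeta_\epsilon^*\le\mathcal J(y_0)+O(\epsilon)$, so $\{\zeta_\epsilon^*\}$ and $\{\lambda_\epsilon\}$ stay bounded, and $\epsilon\|y_\epsilon\|_Y^2\le 2\zeta_\epsilon^*$ yields $\epsilon(y_\epsilon,\cdot)_Y\to 0$ in $Y^*$. Decomposing $x_\epsilon=x_{A,\epsilon}+x_{C,\epsilon}$ again via (D3), boundedness of $P_A$ makes $\{x_{A,\epsilon}\}$ bounded, and rewriting
\begin{equation*}
a(x_{C,\epsilon},y)=\lambda_\epsilon L(y)-\epsilon(y_\epsilon,y)_Y-a(x_{A,\epsilon},y)
\end{equation*}
displays the left-hand side as a linear functional on $Y$ whose dual norm is uniformly bounded in $\epsilon$. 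A second application of (D4) then forces $\|x_{C,\epsilon}\|_X=O(1/c_*)$, so $\{x_\epsilon\}$ is bounded in $X$. Reflexivity of $X$ and weak closedness of $P$ yield a subsequence with $x_\epsilon\rightharpoonup\bar x\in P$ and $\lambda_\epsilon\to\bar\lambda$. Passage to the limit in the KKT identity delivers $a(\bar x,y)=\bar\lambda L(y)$ for every $y\in Y$, i.e.\ $\bar x\in P\cap\Lambda_{\bar\lambda}$ and $\bar\lambda\le\lambda^*$. The sandwich $\zeta^*\le\zeta_\epsilon^*\le\lambda_\epsilon\to\bar\lambda\le\lambda^*\le\zeta^*$ closes the argument, and $\bar x$ is the claimed dual maximizer. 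The case $\zeta^*=+\infty$ is handled by running the same scheme against arbitrary finite thresholds in place of $\zeta^*$, forcing $\lambda^*\ge N$ for every $N$.

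The main obstacle, and the pivotal role of (D4), is the need to convert subdifferential information on $\mathcal J$---inherently formulated in $Y^*$---back into honest members of $P$. Because the image set $\{a(x,\cdot)\mid x\in P\}$ need not be closed in $Y^*$ (its preimage $P$ is unbounded through the cone $P_C$), a priori $\partial\mathcal J(y_\epsilon)$ lives only in the closure, and the decomposition-and-inf-sup argument sketched above is essential both for identifying $x_\epsilon$ at fixed $\epsilon$ and for extracting a weak limit as $\epsilon\to 0$. I expect this twofold use of (D4) to be the technical heart of the proof, with the rest reducing to standard convex analysis in reflexive Banach and Hilbert spaces.
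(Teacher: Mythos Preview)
Your route is genuinely different from the paper's. The paper never regularizes the primal in $Y$; instead it introduces the function
\[
\Phi_\lambda(x)=\sup_{y\neq 0}\frac{a(x,y)-\lambda L(y)}{\|y\|_Y},\qquad
\varphi(\lambda)=\inf_{x\in P}\Phi_\lambda(x),
\]
and shows (via a quadratic Lagrangian in $y$ and the Ekeland--Temam minimax theorem) that $\varphi(\lambda)=0$ precisely for $\lambda\le\zeta^*$. It then proves, using (D3)--(D4), that $\Phi_\lambda$ is coercive on $P$, so that the infimum in $x$ is attained; for $\lambda>\lambda^*$ the minimizer cannot lie in $\Lambda_\lambda$, forcing $\varphi(\lambda)>0$ and hence $\lambda\ge\zeta^*$. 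Thus the paper works entirely on the $x$--side and needs no KKT multiplier, no subdifferential calculus for $\mathcal J$, and no separate treatment of $\zeta^*=+\infty$. Your scheme works on the $y$--side, is more in the spirit of standard convex duality, and has the appealing feature that the dual solution $\bar x$ emerges as a weak limit of concrete KKT points; the price is the constraint-qualification machinery below.

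Two points in your argument need more than ``the Hilbert framework delivers''. First, the existence of the multiplier $\lambda_\epsilon$ is \emph{not} automatic: in the applications of Section~\ref{sec_examples} the effective domain $\mathrm{dom}\,\mathcal J$ is a closed subspace (e.g.\ divergence-free fields) with empty interior, so the usual Slater-type qualification fails. What does work is the Fenchel--Rockafellar image-space condition $0\in\mathrm{int}\bigl(\{1\}-L(\mathrm{dom}\,\mathcal J)\bigr)$ in $\mathbb R$; since $\mathrm{dom}\,\mathcal J$ is a cone and (when $\zeta^*<+\infty$) contains a $y_0$ with $L(y_0)=1$, one has $L(\mathrm{dom}\,\mathcal J)\supset[0,\infty)$ and the condition holds. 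You should state this explicitly. Second, and more seriously, your last sentence does not dispose of the case $\zeta^*=+\infty$. In that case $\mathrm{dom}\,\mathcal J\cap\{L=1\}=\emptyset$, hence $\zeta^*_\epsilon=+\infty$ for every $\epsilon$, there is no minimizer $y_\epsilon$, and the ``same scheme against arbitrary finite thresholds'' has nothing to act on. What you must actually exclude is the scenario $\lambda^*<+\infty$, $\zeta^*=+\infty$, and your regularization gives no handle on this. The paper's $\Phi_\lambda$ argument covers it automatically (if $\lambda>\lambda^*$ then $\varphi(\lambda)>0$, so $\lambda>\zeta^*$); within your framework you would need a separate argument, for instance producing directly a $y\in\mathcal K$ with $L(y)>0$ from (D3)--(D4), which is not obvious.
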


It is worth noting that for the validity of the theorem it suffices to assume that the set $P_C$ is only closed and convex in $X$ and satisfies $(D4)$. On the other hand, we have
$$\frac{a(x_C,y)}{\|x_C\|_X\|y\|_Y}=\frac{a(\alpha x_C,y)}{\|\alpha x_C\|_X\|y\|_Y}\quad\forall \alpha>0.$$
This fact (independence of the scaling parameter) explains why we assume that $P_C$ is a convex cone. In addition, we shall see in Section \ref{sec_majorant} that the cones $P_C$ and $\mathrm{dom}\,\mathcal J$ are closely related.

We also note that any closed linear subspace of $X$ is a special case of the cone $P_C$. Then, we arrive at the standard inf-sup condition on function spaces. This case will be considered in Theorem \ref{theorem_duality4} and in Section \ref{sec_examples}.


\section{The proof of Theorem \ref{theorem_duality3}}
\label{sec_proof}

Within this section we assume that the conditions (A1)--(A4), (D1)--(D4) are satisfied and also $\lambda^*<+\infty$ (notice that Theorem \ref{theorem_duality3} holds trivially for $\lambda^*=+\infty$). To prove this theorem we define auxiliary functions
$\varphi\colon\mathbb R\rightarrow\mathbb R_+$ and $\Phi_\lambda\colon X\rightarrow \mathbb R_+$:
\begin{equation}
\varphi(\lambda):=\inf_{x\in P}\Phi_\lambda(x),\quad \Phi_\lambda(x):=\sup_{\substack{y\in Y\\ y\neq 0_Y}}\frac{a(x,y)-\lambda L(y)}{\|y\|_Y}.
\label{phi}
\end{equation}
Their basic properties are introduced in the following lemma.
\begin{lemma}
	The function $\Phi_\lambda$ is nonnegative, convex and Lipschitz continuous in $X$ for any $\lambda\in\mathbb R_+$. The function $\varphi$ is nonnegative, nondecreasing, and Lipschitz continuous in $\mathbb R_+$.
	\label{lem_Phi_lambda01}
\end{lemma}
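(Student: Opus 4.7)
The plan is to handle the six claims in the order they are stated, exploiting the bilinearity of $a$, the identification of the sup in $y$ with a dual norm, and the fact that $0_X\in P$ together with convexity of $P$.

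\textbf{Properties of $\Phi_\lambda$.} Observe that, since the expression $[a(x,y)-\lambda L(y)]/\|y\|_Y$ is odd under $y\mapsto -y$, the supremum in \eqref{phi} equals $\|a(x,\cdot)-\lambda L\|_{Y^*}$, which is manifestly nonnegative (this also follows by noting the feasibility of a rescaled Hahn--Banach direction, but the $y\mapsto-y$ symmetry is the shortest route). Convexity is immediate: each map $x\mapsto [a(x,y)-\lambda L(y)]/\|y\|_Y$ is affine by (A2), hence $\Phi_\lambda$ is a pointwise supremum of affine functions and so convex. For Lipschitz continuity, let $c_a$ denote the continuity constant of $a$ from (A2); for any $x_1,x_2\in X$ and $y\ne 0_Y$,
\begin{equation*}
\frac{a(x_1,y)-\lambda L(y)}{\|y\|_Y}-\frac{a(x_2,y)-\lambda L(y)}{\|y\|_Y}
=\frac{a(x_1-x_2,y)}{\|y\|_Y}\le c_a\|x_1-x_2\|_X.
\end{equation*}
Taking the supremum over $y$ and then exchanging the roles of $x_1,x_2$ yields $|\Phi_\lambda(x_1)-\Phi_\lambda(x_2)|\le c_a\|x_1-x_2\|_X$, with a constant independent of $\lambda$.

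\textbf{Properties of $\varphi$.} Nonnegativity follows instantly from the nonnegativity of $\Phi_\lambda$. The monotonicity is where the assumption $0_X\in P$ enters. Given $0\le\lambda_1\le\lambda_2$ (with $\lambda_2>0$), for each $x\in P$ the element $x':=(\lambda_1/\lambda_2)\,x$ lies in $P$, since $P$ is convex and contains $0_X$. Bilinearity of $a$ then gives
\begin{equation*}
\Phi_{\lambda_1}(x')=\sup_{y\ne 0_Y}\frac{(\lambda_1/\lambda_2)\,a(x,y)-\lambda_1 L(y)}{\|y\|_Y}
=\frac{\lambda_1}{\lambda_2}\,\Phi_{\lambda_2}(x)\le\Phi_{\lambda_2}(x),
\end{equation*}
so that $\varphi(\lambda_1)\le\Phi_{\lambda_1}(x')\le\Phi_{\lambda_2}(x)$; taking the infimum over $x\in P$ produces $\varphi(\lambda_1)\le\varphi(\lambda_2)$. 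The case $\lambda_1=0$ is trivial because $\varphi(0)\le\Phi_0(0_X)=0$.

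\textbf{Lipschitz continuity of $\varphi$.} For any $x\in X$ and any $\lambda_1,\lambda_2\in\mathbb R_+$, the elementary estimate
\begin{equation*}
\Big|\frac{a(x,y)-\lambda_1 L(y)}{\|y\|_Y}-\frac{a(x,y)-\lambda_2 L(y)}{\|y\|_Y}\Big|
=|\lambda_2-\lambda_1|\,\frac{|L(y)|}{\|y\|_Y}\le|\lambda_2-\lambda_1|\,\|L\|_{Y^*}
\end{equation*}
gives $|\Phi_{\lambda_1}(x)-\Phi_{\lambda_2}(x)|\le|\lambda_1-\lambda_2|\,\|L\|_{Y^*}$ after taking the supremum over $y$. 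Passing to the infimum over $x\in P$ (on both sides of the one-sided bounds, as in the $X$-Lipschitz argument) then yields $|\varphi(\lambda_1)-\varphi(\lambda_2)|\le\|L\|_{Y^*}|\lambda_1-\lambda_2|$.

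I do not anticipate a serious obstacle: the only step requiring more than mechanical computation is the monotonicity of $\varphi$, whose success rests on combining the bilinearity of $a$ with the scaling freedom afforded by $0_X\in P$ and the convexity of $P$; replacing $x$ by the convex combination $(\lambda_1/\lambda_2)x+(1-\lambda_1/\lambda_2)\cdot 0_X$ is exactly the device that turns a non-monotone family $\lambda\mapsto\Phi_\lambda(x)$ into a monotone infimum.
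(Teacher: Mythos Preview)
Your proof is correct and follows essentially the same route as the paper: the Lipschitz estimate for $\Phi_\lambda$ via the continuity of $a$, the monotonicity of $\varphi$ via the scaling $x\mapsto(\lambda_1/\lambda_2)x\in P$ (the paper writes this as $x\mapsto x/\alpha$ with $\alpha\ge 1$), and the Lipschitz estimate for $\varphi$ via the uniform-in-$x$ bound $|\Phi_{\lambda_1}(x)-\Phi_{\lambda_2}(x)|\le\|L\|_{Y^*}|\lambda_1-\lambda_2|$. Your explicit identification $\Phi_\lambda(x)=\|a(x,\cdot)-\lambda L\|_{Y^*}$ using the oddness under $y\mapsto-y$ is a nice touch that the paper leaves implicit.
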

\begin{proof}
	It is straightforward to verify that $\Phi_\lambda$ and $\varphi$ are nonnegative and convex. Let $x_1,x_2\in X$. Then, using continuity of the bilinear form $a$, we have
	$$\Phi_\lambda(x_1)=\sup_{\substack{y\in Y\\ y\neq 0_Y}}\frac{a(x_2,y)-\lambda L(y)+a(x_1-x_2,y)}{\|y\|_Y}\leq \Phi_\lambda(x_2)+\|a\|\|x_1-x_2\|_X,$$
	where $\|a\|$ is the norm of $a$.
	Similarly, $\Phi_\lambda(x_2)\leq \Phi_\lambda(x_1)+\|a\|\|x_1-x_2\|_X$ and so $|\Phi_\lambda(x_1)-\Phi_\lambda(x_2)|\leq \|a\|\|x_1-x_2\|_X$ proving the Lipschitz continuity of $\Phi_\lambda$ in $X$.
	
	Since $P$ is convex and $0_X\in P$, we have $x/\alpha\in P$ for any $x\in P$ and $\alpha\geq1$. Hence,
	$$\varphi(\alpha\lambda)=\alpha\inf_{x\in P}\Phi_\lambda(x/\alpha)\geq\alpha\inf_{x\in P}\Phi_\lambda(x)=\alpha\varphi(\lambda)\geq \varphi(\lambda)\quad\forall\alpha\geq1,$$
	i.e, $\varphi$ is nondecreasing in $\mathbb R_+$. Let $\lambda,\bar\lambda\in\mathbb R_+$, $\lambda<\bar\lambda$. Then $\varphi(\lambda)\leq\varphi(\bar\lambda)$ and
	$$\varphi(\bar\lambda)=\inf_{x\in P}\sup_{\substack{y\in Y\\ y\neq 0}}\frac{a(x,y)-\lambda L(y)-(\bar\lambda-\lambda)L(y)}{\|y\|_Y}\leq \varphi(\lambda)+(\bar\lambda-\lambda)\|L\|_{Y*}.$$
	Thus $\varphi$ is Lipschitz continuous in $\mathbb R_+$ with modulus $\|L\|_{Y^*}$.
\end{proof}

The next lemma shows that the function $\varphi$ is closely related to the problems \eqref{primal_problem} and \eqref{dual_problem}.

\begin{lemma}
	The function $\varphi$ defined in \eqref{phi} satisfies the following relations:
	\begin{equation}
	\varphi(\lambda)=0 \;\;\mbox{if}\;\; \lambda\leq \zeta^*, \quad \varphi(\lambda)>0 \;\;\mbox{if}\;\; \lambda> \zeta^*,
	\label{phi01}
	\end{equation}
and
	\begin{equation}
	\lambda^*=\zeta^*\quad\mbox{if and only if} \quad\varphi(\lambda)>0 \quad\forall \lambda>\lambda^*.
	\label{phi02}
	\end{equation}
	\label{lem_phi01}
\end{lemma}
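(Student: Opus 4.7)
My plan is to prove (\ref{phi01}) by treating the cases $\lambda > \zeta^*$ and $\lambda \leq \zeta^*$ separately, after which (\ref{phi02}) follows as a short corollary. For the easy implication $\lambda > \zeta^* \Rightarrow \varphi(\lambda) > 0$, I would pick $y_0 \in Y$ with $L(y_0) = 1$ and $\mathcal J(y_0) < \lambda$, guaranteed by the definition of $\zeta^*$ as an infimum. Since $a(x, y_0) \leq \mathcal J(y_0) < \lambda L(y_0)$ for every $x \in P$, substituting $-y_0$ into the supremum defining $\Phi_\lambda(x)$ yields the uniform lower bound
\[
\Phi_\lambda(x) \;\geq\; \frac{a(x,-y_0) - \lambda L(-y_0)}{\|y_0\|_Y} \;\geq\; \frac{\lambda - \mathcal J(y_0)}{\|y_0\|_Y} \;>\; 0,
\]
so $\varphi(\lambda) > 0$.

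The converse $\lambda \leq \zeta^* \Rightarrow \varphi(\lambda) = 0$ is the main difficulty, and I would handle it by duality. The key observation is that the symmetry $y \mapsto -y$ gives $\Phi_\lambda(x) = \|a(x,\cdot) - \lambda L\|_{Y^*}$, hence
\[
\varphi(\lambda) \;=\; \mathrm{dist}_{Y^*}(\lambda L,\, A_P), \qquad A_P := \{a(x,\cdot) : x \in P\} \subset Y^*,
\]
and $A_P$ is convex. Next I would verify that $\lambda \leq \zeta^*$ is equivalent to $\lambda L(y) \leq \mathcal J(y)$ for every $y \in Y$ (using $\mathcal J \geq 0$ to handle $L(y) \leq 0$, and positive homogeneity of $\mathcal J$ together with the definition of $\zeta^*$ for $L(y) > 0$). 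By the Hahn-Banach separation theorem in $Y^*$ equipped with its weak-$*$ topology, this condition is exactly $\lambda L \in \overline{A_P}^{\,w^*}$. Since $Y$ is Hilbert and therefore reflexive, the weak-$*$ and weak topologies on $Y^*$ coincide, and Mazur's theorem identifies the weak and norm closures of the convex set $A_P$. Consequently $\lambda L$ lies in the norm closure of $A_P$, i.e., $\varphi(\lambda) = 0$.

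Statement (\ref{phi02}) follows at once from (\ref{phi01}): if $\lambda^* = \zeta^*$, then $\lambda > \lambda^* = \zeta^*$ gives $\varphi(\lambda) > 0$ by (\ref{phi01}); conversely, if $\varphi(\lambda) > 0$ whenever $\lambda > \lambda^*$, then (\ref{phi01}) forces $\lambda > \zeta^*$ whenever $\lambda > \lambda^*$, so $\lambda^* \geq \zeta^*$, which together with the general inequality $\lambda^* \leq \zeta^*$ yields $\lambda^* = \zeta^*$. The main obstacle is the second half of (\ref{phi01}): reflexivity of $Y$ is precisely what bridges the gap between the weak-$*$ closure of $A_P$ (into which $\lambda L$ falls by virtue of $\lambda \leq \zeta^*$) and the norm closure (membership in which is equivalent to $\varphi(\lambda) = 0$).
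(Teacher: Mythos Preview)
Your proof is correct, but it follows a genuinely different route from the paper's. The paper introduces the regularized Lagrangian $\mathcal L(x,y)=\tfrac12\|y\|_Y^2+a(x,y)-\lambda L(y)$, applies the Ekeland--Temam minimax theorem to swap $\inf_y\sup_x$ and $\sup_x\inf_y$, and computes the inner minimum over $y$ via the Riesz identity $(y_x,y)_Y=\lambda L(y)-a(x,y)$ to obtain the explicit formula
\[
\varphi(\lambda)=\Bigl(-2\min_{y\in Y}\bigl\{\tfrac12\|y\|_Y^2+\mathcal J(y)-\lambda L(y)\bigr\}\Bigr)^{1/2},
\]
from which (\ref{phi01}) is read off directly (with a continuity argument for $\lambda=\zeta^*$). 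Your argument instead interprets $\varphi(\lambda)$ as the norm distance in $Y^*$ from $\lambda L$ to the convex set $A_P=\{a(x,\cdot):x\in P\}$, identifies the condition $\lambda\le\zeta^*$ with membership of $\lambda L$ in the weak-$*$ closure of $A_P$ via Hahn--Banach, and then invokes reflexivity together with Mazur's theorem to pass to the norm closure. Both approaches exploit the Hilbert (or at least reflexive) structure of $Y$ at the decisive step: the paper through the Riesz representation needed for $\|y_x\|_Y=\Phi_\lambda(x)$, you through the identification of weak-$*$ and weak topologies on $Y^*$. Your route is more conceptual and isolates cleanly the role of reflexivity; the paper's route is more computational and yields the formula (\ref{phi2}) as a by-product, though that formula is not used elsewhere. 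Your treatment of the case $\lambda>\zeta^*$ is also more elementary than the paper's, which extracts it from the same formula (\ref{phi2}).
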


\begin{proof}	
	To prove (\ref{phi01}) we use the Lagrangian
	\begin{equation}
	\mathcal L(x,y):=\frac{1}{2}\|y\|_Y^2+a(x,y)-\lambda L(y),\quad x\in P,\; y\in Y.
	\label{Lagr_L}
	\end{equation}
	The mapping $y\mapsto\mathcal L(x,y)$ is coercive, convex, and continuous in $Y$ for any $x\in P$ while $x\mapsto \mathcal L(x,y)$ is linear for any $y\in Y$ and the set $P$ is closed and convex in $X$. Therefore, by \cite[Proposition VI 2.3]{ET74}, we know that
	\begin{equation}
	\min_{y\in Y}\sup_{x\in P}\mathcal L(x,y)=\sup_{x\in P}\inf_{y\in Y}\mathcal L(x,y).
	\label{duality_L}
	\end{equation}
	For any given $x\in P$, there exists a unique element $y_x\in Y$ such that 
	$$\mathcal L(x,y_x)\leq \mathcal L(x,y)\quad\forall y\in Y,$$
	or equivalently
	\begin{equation}
	(y_x,y)_Y=\lambda L(y)-a(x,y)\quad \forall y\in Y.
	\label{v_tau}
	\end{equation}
	Consequently,
	\begin{equation}
	\|y_x\|_Y=\sup_{\substack{y\in Y\\ y\neq 0_Y}}\frac{(y_x,y)_Y}{\|y\|_Y}=\sup_{\substack{y\in Y\\ y\neq 0_Y}}\frac{-(y_x,y)_Y}{\|y\|_Y}=\sup_{\substack{y\in Y\\ y\neq 0_Y}}\frac{a(x,y)-\lambda L(y)}{\|y\|_Y}=\Phi_\lambda(x)
	\label{norm_v_tau}
	\end{equation}
	and
	\begin{equation}
	\sup_{x\in P}\inf_{y\in Y}\mathcal L(x,y)=\sup_{x\in P}\Big\{-\frac{1}{2}\|y_x\|^2_Y\Big\}\stackrel{(\ref{norm_v_tau})}{=}-\frac{1}{2}\inf_{x\in P}\Phi_\lambda^2(x)=-\frac{1}{2}\left(\inf_{x\in P}\Phi_\lambda(x)\right)^2=-\frac{1}{2}\varphi^2(\lambda).
	\label{si=is}
	\end{equation}
		
	From (\ref{duality_L}) and (\ref{si=is}), we have:
	$$
	-\frac{1}{2}\varphi^2(\lambda)=\min_{y\in Y}\sup_{x\in P}\mathcal L(x,y)=\min_{y\in Y}\left\{\frac{1}{2}\|y\|_Y^2+\mathcal J(y)-\lambda L(y)\right\}\quad\forall\lambda\in\mathbb R_+,
	$$
	where $\mathcal J$ is the primal functional defined by (\ref{J0}). Thus,
	\begin{equation}
	\varphi(\lambda)=\left(-2\min_{y\in Y}\left\{\frac{1}{2}\|y\|_Y^2+\mathcal J(y)-\lambda L(y)\right\}\right)^{1/2}.
	\label{phi2}
	\end{equation}
	From (\ref{dual_problem1}), one can see that $\mathcal J(y)-\lambda L(y)\geq0$ for any $\lambda<\zeta^*$ and $y\in Y$. Hence, $\varphi(\lambda)=0$ for any $\lambda<\zeta^*$ and $\varphi(\zeta^*)=0$ using the continuity argument. On the other hand, if $\lambda>\zeta^*$ then there exists $\bar y\in Y$ such that $\mathcal J(\bar y)-\lambda L(\bar y)<0$. Hence,
	$$\frac{1}{2}\|\alpha\bar y\|_Y^2+\mathcal J(\alpha\bar y)-\lambda L(\alpha\bar y)=\alpha\left\{\frac{\alpha}{2}\|\bar y\|_Y^2+\mathcal J(\bar y)-\lambda L(\bar y)\right\}<0$$
	for any $\alpha>0$ small enough. From this and (\ref{phi2}), it follows that $\varphi(\lambda)>0$ for any $\lambda>\zeta^*$. Therefore, (\ref{phi01}) holds. It is easy to see that (\ref{phi02}) follows from (\ref{phi01}) and the inequality $\lambda^*\leq\zeta^*$.
\end{proof}

Next, consider the following problem: {\it given} $\lambda\geq0$,
\begin{equation}
\mbox{{\it find }}x_\lambda\in P:\quad \Phi_\lambda(x_\lambda)\leq\Phi_\lambda(x)\quad\forall x\in P.
\label{problem_Phi}
\end{equation}

\begin{lemma}
	Let (\ref{problem_Phi}) have a solution for any $\lambda\geq0$. Then $\lambda^*=\zeta^*$. In addition, $P\cap \Lambda_{\lambda^*}\neq\emptyset$; that is, the dual problem \eqref{primal_problem} has a solution.
	\label{lem_equivalence}
\end{lemma}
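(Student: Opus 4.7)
The plan is to apply the hypothesis at the critical value $\lambda=\zeta^*$ and then convert the resulting identity ``$\Phi_{\zeta^*}$ attains the value $0$ on $P$'' into membership of the equality set $\Lambda_{\zeta^*}$. I will handle the generic case $\zeta^*<+\infty$ first and indicate the straightforward adaptation for $\zeta^*=+\infty$.

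First I would invoke the assumption at $\lambda=\zeta^*$ to obtain $x^*\in P$ with $\Phi_{\zeta^*}(x^*)=\inf_{x\in P}\Phi_{\zeta^*}(x)=\varphi(\zeta^*)$. By Lemma \ref{lem_phi01} we have $\varphi(\zeta^*)=0$ (this is precisely the reason continuity of $\varphi$ was established in Lemma \ref{lem_Phi_lambda01}), so $\Phi_{\zeta^*}(x^*)=0$. Unfolding the definition \eqref{phi}, this reads $a(x^*,y)\leq\zeta^*L(y)$ for every $y\in Y$; replacing $y$ by $-y$ yields the opposite inequality, and hence the pointwise equality $a(x^*,y)=\zeta^*L(y)$ for all $y\in Y$. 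Therefore $x^*\in \Lambda_{\zeta^*}$.

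Combining $x^*\in P\cap\Lambda_{\zeta^*}$ with the characterization \eqref{primal_problem} gives $\lambda^*\geq\zeta^*$, which together with the always-valid estimate $\lambda^*\leq\zeta^*$ yields $\lambda^*=\zeta^*$ and identifies $x^*$ as a solution of \eqref{primal_problem}. In the remaining case $\zeta^*=+\infty$, Lemma \ref{lem_phi01} gives $\varphi(\lambda)=0$ for every $\lambda\geq0$, and the same argument applied at arbitrarily large $\lambda$ produces points $x_\lambda\in P\cap\Lambda_\lambda$, so $\lambda^*\geq\lambda$ for every $\lambda\geq0$ and hence $\lambda^*=+\infty=\zeta^*$. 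The only mildly delicate step — and really the hinge of the whole argument — is the passage from the vanishing supremum $\Phi_{\zeta^*}(x^*)=0$ to the pointwise identity $a(x^*,\cdot)=\zeta^*L$ on all of $Y$; this rests on the sign symmetry of the ratio in \eqref{phi}, which allows both $y$ and $-y$ to be used as test elements.
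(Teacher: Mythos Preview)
Your proof is correct and rests on the same pivotal observation as the paper, namely that $\Phi_\lambda(x)=0$ is equivalent to $x\in\Lambda_\lambda$. The paper, however, organizes the argument differently: it fixes an arbitrary $\lambda>\lambda^*$, observes that the minimizer $x_\lambda$ cannot lie in $\Lambda_\lambda$ (by definition of $\lambda^*$), deduces $\varphi(\lambda)=\Phi_\lambda(x_\lambda)>0$, and then invokes the characterization \eqref{phi02} to obtain $\lambda^*=\zeta^*$; existence is treated by a separate application at $\lambda\le\lambda^*$. Your route is more economical: you work at the single value $\lambda=\zeta^*$, use \eqref{phi01} to get $\varphi(\zeta^*)=0$, and thereby exhibit an element of $P\cap\Lambda_{\zeta^*}$ that delivers both $\lambda^*\ge\zeta^*$ and existence at once. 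The trade-off is that the paper never has to evaluate anything at $\zeta^*$ and so needs no finiteness case distinction, while your version is shorter once the case $\zeta^*=+\infty$ is disposed of (which, incidentally, is excluded by the standing assumption $\lambda^*<+\infty$ made at the beginning of Section~\ref{sec_proof}).
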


\begin{proof}
	Let $\lambda>\lambda^*$ be fixed but arbitrary and $x_\lambda\in P$ be the solution to (\ref{problem_Phi}). From \eqref{primal_problem} and the choice of $\lambda$, it follows that $x_\lambda\not\in\Lambda_\lambda$. Using the definition \eqref{Lambda} of $\Lambda_\lambda$, we see that there exists $\bar y\in Y$ such that $a(x_\lambda,\bar y)-\lambda L(\bar y)>0$. Hence, $\varphi(\lambda)=\Phi_\lambda(x_\lambda)>0$. By Lemma \ref{lem_phi01}, we have $\lambda^*=\zeta^*$. If $\lambda\leq\lambda^*$ then $\varphi(\lambda)=\Phi_\lambda(x_\lambda)=0$ and so $x_\lambda\in P\cap \Lambda_\lambda$, proving the existence of a solution to \eqref{primal_problem}.
\end{proof}

\medskip\noindent
{\bf Proof of Theorem \ref{theorem_duality3}.}
The proof is based on Lemma \ref{lem_equivalence}. We show that the problem (\ref{problem_Phi}) has a solution for any $\lambda\geq0$ under the assumptions of Theorem \ref{theorem_duality3}. From Lemma \ref{lem_Phi_lambda01}, we know that the function $\Phi_\lambda$ is convex and Lipschitz continuous in $X$ for any $\lambda\in\mathbb R_+$. Using the assumptions (D3) and (D4) we prove that $\Phi_\lambda$ is also coercive in $P$ for any $\lambda\geq0$. Indeed, for any $x\in P$ and $\lambda\in\mathbb R_+$, we have:
	\begin{eqnarray}
	\Phi_\lambda(x)&=&\sup_{\substack{y\in Y\\ y\neq 0_Y}}\frac{a(x_C,y)+a(x_A,y)-\lambda L(y)}{\|y\|_Y}\nonumber\\
	&\geq& c_*\|x_\mathcal C\|_X-\|a\|\|x_A\|_X-\lambda\|L\|_{Y^*}\nonumber\\
	&\geq& c_*\|x\|_X-(c_*+\|a\|)\|x_A\|_X-\lambda\|L\|_{Y^*}\nonumber\\
	&\geq& c_*\|x\|_X-(c_*+\|a\|)\rho_A-\lambda\|L\|_{Y^*}\quad\forall x\in P,
	\label{coercivity}
	\end{eqnarray}
	where $c_*>0$ is the inf-sup constant from \eqref{inf-sup_abstract} and $\rho_A$ is a positive constant characterizing the boundedness of $A$. Since $X$ is a reflexive Banach space, the properties of $\Phi_\lambda$ guarantee that (\ref{problem_Phi}) has a solution for any $\lambda\geq0$. 
\qed


\section{Generalizations of Theorem \ref{theorem_duality3}}
\label{sec_extension}

Now we present three different generalizations (or extensions) of Theorem \ref{theorem_duality3}. Since their proofs are quite analogous, we only sketch them.

First, the assumption (D4) cannot hold if the subspace
\begin{equation}
H:=\{x_0\in X\ |\;\; a(x_0,y)=0\quad\forall y\in Y\}
\label{X0}
\end{equation}
contains an element $x_0\in P_C$ such that $x_0\neq0$. In Section \ref{sec_examples}, we will show that this case may arise in some applications. To weaken the assumption (D4) we introduce the quotient space $X/ H$ (with the norm $\|\cdot\|_{X/ H}$) whose elements are the equivalence classes induced by the equivalence relation:
$$x_1\cong x_2\quad\mbox{if and only if}\quad x_1-x_2\in H,\quad x_1,x_2\in X.$$
Let $P/H$ denote the set of equivalent classes generated by the set $P$. It is easy to verify that $P/H$ is a closed, convex, and nonempty set in $X/ H$. Similarly, one can introduce the sets $P_A/H$ and $P_C/H$, where $P_A$ and $P_C$ are defined in accordance with the assumption (D3). These sets have properties analogous to properties of $P_A$ and $P_C$. In particular, for any $x\in P/H$ there exists $x_A\in P_A/H$ and $x_C\in P_C/H$ such that $x=x_A+x_C$. We note that if $X$ is a Hilbert space then $X/ H$ can be identified with the orthogonal complement $H^\perp$  of $H$ in $X$ and $P/H$ with the projection of $P$ onto $H^\perp$. 

\begin{theorem}
	Let the assumptions (A1)--(A4) and (D1)-(D3) of Theorem \ref{theorem_duality3} be satisfied, $H$ be defined by \eqref{X0}, and
		\begin{equation}
		\inf_{\substack{x_C\in P_{C}/H\\ x_C\neq0_X}}\ \sup_{\substack{y\in Y\\ y\neq 0_Y}}\ \frac{a(x_C,y)}{\|x_C\|_{X/ H}\|y\|_Y} =c_*>0.
		\label{inf-sup_abstract4}
		\end{equation}
	Then $\lambda^*=\zeta^*$. If, in addition $\lambda^*<+\infty$ then the dual problem (\ref{primal_problem}) has a solution.
	\label{theorem_duality6}
\end{theorem}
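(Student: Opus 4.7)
The strategy is to lift the proof of Theorem \ref{theorem_duality3} to the quotient space $X/H$. Since $H=\bigcap_{y\in Y}\ker a(\cdot,y)$ is an intersection of kernels of continuous linear functionals, it is a closed subspace of $X$. Consequently $X/H$ is a Banach space, and it inherits reflexivity from $X$ (a closed subspace of a reflexive space has reflexive quotient). Because $a(x_0,y)=0$ for every $x_0\in H$ and $y\in Y$, the bilinear form $a$ factors through the quotient to a continuous bilinear form $\tilde a\colon(X/H)\times Y\to\mathbb R$, $\tilde a([x],y):=a(x,y)$. The functional $\mathcal I$ from \eqref{I} is $H$-invariant, hence descends to $\tilde{\mathcal I}$ on $X/H$, while $\Lambda_\lambda$ is $H$-saturated, so $P\cap\Lambda_\lambda\neq\emptyset$ iff $(P/H)\cap\tilde\Lambda_\lambda\neq\emptyset$ where $\tilde\Lambda_\lambda$ is defined via $\tilde a$. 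It follows that $\lambda^*=\sup_{[x]\in P/H}\tilde{\mathcal I}([x])$, whereas $\zeta^*$ is untouched because it involves only $Y$.

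Next I would reproduce the argument of Section \ref{sec_proof} in the quotient setting. Define
\begin{equation*}
\tilde\Phi_\lambda([x]):=\sup_{\substack{y\in Y\\ y\neq 0_Y}}\frac{\tilde a([x],y)-\lambda L(y)}{\|y\|_Y},
\end{equation*}
which agrees with $\Phi_\lambda(x)$ for any representative $x\in[x]$. The analogues of Lemmas \ref{lem_Phi_lambda01}, \ref{lem_phi01} and \ref{lem_equivalence} transfer verbatim once $X$, $P$, $\Phi_\lambda$ are replaced by $X/H$, $P/H$, $\tilde\Phi_\lambda$. For the coercivity step, given $[x]\in P/H$ use the assumption (D3) together with the properties of the quotient map to write $[x]=[x_A]+[x_C]$ with $[x_A]\in P_A/H$ and $[x_C]\in P_C/H$, and apply the modified inf-sup condition \eqref{inf-sup_abstract4} in place of \eqref{inf-sup_abstract}; the triangle inequality in $X/H$ yields
\begin{equation*}
\tilde\Phi_\lambda([x])\;\geq\;c_*\|[x_C]\|_{X/H}-\|a\|\,\|[x_A]\|_{X/H}-\lambda\|L\|_{Y^*}\;\geq\;c_*\|[x]\|_{X/H}-(c_*+\|a\|)\tilde\rho_A-\lambda\|L\|_{Y^*},
\end{equation*}
where $\tilde\rho_A$ bounds $\|[x_A]\|_{X/H}$ on $P_A/H$ (which is bounded because $P_A$ is). Reflexivity of $X/H$, together with convexity, lower semicontinuity and coercivity of $\tilde\Phi_\lambda$ and closedness of $P/H$, produces a minimizer of $\tilde\Phi_\lambda$ on $P/H$ for every $\lambda\geq 0$, i.e., the quotient analogue of \eqref{problem_Phi} is solvable. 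The quotient version of Lemma \ref{lem_equivalence} then delivers $\lambda^*=\zeta^*$; when $\lambda^*<+\infty$, a minimizer $[\bar x]\in(P/H)\cap\tilde\Lambda_{\lambda^*}$ lifts to any representative $\bar x\in P$ solving the dual problem \eqref{primal_problem}.

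The main obstacle is not analytic but is bookkeeping on the quotient. One must verify that $P/H$ is closed in $X/H$ (equivalently, that $P+H$ is closed in $X$, as asserted in the passage preceding the theorem), that the decomposition $P\subset P_A+P_C$ projects cleanly so that $P/H\subset(P_A/H)+(P_C/H)$ with $P_A/H$ bounded and $P_C/H$ a closed convex cone, and that $\tilde a$, $\tilde{\mathcal I}$, $\tilde\Phi_\lambda$ retain the continuity, convexity and homogeneity used in Section \ref{sec_proof}. Once these routine verifications are in place, the conclusion is a direct transcription of the proof of Theorem \ref{theorem_duality3}.
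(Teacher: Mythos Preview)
Your proposal is correct and follows essentially the same route as the paper: observe that $\Phi_\lambda$ is $H$-invariant, pass to the quotient $X/H$, establish coercivity of the induced functional on $P/H$ via the modified inf-sup condition \eqref{inf-sup_abstract4} and the decomposition $P/H\subset(P_A/H)+(P_C/H)$, then use reflexivity of $X/H$ to obtain a minimizer and lift it back to $P$. The paper's sketch is terser (it does not explicitly introduce $\tilde a$, $\tilde{\mathcal I}$, $\tilde\Lambda_\lambda$), but the substance is the same, and the bookkeeping points you flag (closedness of $P/H$, boundedness of $P_A/H$, the decomposition) are precisely the properties the paper asserts in the paragraph preceding the theorem.
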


\noindent
{\it Sketch of the proof:} It suffices to show that \eqref{problem_Phi} has a solution in $P$ for any $\lambda\geq0$ under the assumptions of this theorem. Let $\lambda\geq 0$ be fixed. Using \eqref{X0}, we see that the function $\Phi_\lambda$ defined in \eqref{phi} satisfies
\begin{equation}
\Phi_\lambda(x+x_0)=\Phi_\lambda(x)\quad\forall x\in X,\;\forall x_0\in H.
\label{Phi_prop}
\end{equation}
Therefore, \eqref{problem_Phi} has a solution in $P$ if and only if $\Phi_\lambda$ has a minimum in $P/H$. From \eqref{inf-sup_abstract4}, one can prove the coercivity of $\Phi_\lambda$ in $P/H$ analogously as in the proof of Theorem \ref{theorem_duality3}. Therefore, $\Phi_\lambda$ has a minimum in $P/H$ and thus the result of Theorem \ref{theorem_duality6} holds. \qed

Second, it turns out that the assumption (D2) of Theorem \ref{theorem_duality3} can be extended to some reflexive Banach spaces associated with a bounded Lipschitz domain $\Omega\subset \mathbb R^d$, $d=2,3$.

\begin{theorem}
	Let the assumptions (A1)--(A4) and (D1), (D3)-(D4) be satisfied and 
	\begin{itemize}
		\item[(D2')] $Y=W^{1,p}(\Omega,\mathbb R^m)$, equipped with the standard Sobolev norm 
		$$\|y\|_Y=\left(\int_\Omega|\nabla y|^p+|y|^p\,dx\right)^{1/p}.$$
	\end{itemize}
	Then $\lambda^*=\zeta^*$. In addition, if $\lambda^*<+\infty$ then the dual problem (\ref{primal_problem}) has a solution.
	\label{theorem_duality4}
\end{theorem}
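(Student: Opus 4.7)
My plan is to revisit the proof of Theorem \ref{theorem_duality3} and isolate the single step where the Hilbert structure of $Y$ was actually used, namely the chain of identities \eqref{Lagr_L}--\eqref{si=is} in the proof of Lemma \ref{lem_phi01} that relies on the Riesz representation. Everything else in Section \ref{sec_proof}---the statement and proof of Lemma \ref{lem_Phi_lambda01}, the definitions of $\varphi$ and $\Phi_\lambda$, Lemma \ref{lem_equivalence}, the coercivity bound \eqref{coercivity}, and the final existence argument via reflexivity of $X$---uses only that $\|\cdot\|_Y$ is a norm on a Banach space with dual norm $\|\cdot\|_{Y^*}$, that $a,L$ are continuous, and that $Y$ is reflexive. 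All of these transfer verbatim to $Y=W^{1,p}(\Omega,\mathbb R^m)$ with $1<p<\infty$, so the only thing I really need is a Banach-space replacement for Lemma \ref{lem_phi01}.

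The first step is to replace the quadratic Lagrangian \eqref{Lagr_L} by its $p$-power analogue
\begin{equation*}
\mathcal L_p(x,y):=\tfrac{1}{p}\|y\|_Y^p+a(x,y)-\lambda L(y),\qquad x\in P,\; y\in Y.
\end{equation*}
For each $x\in P$ the map $y\mapsto \mathcal L_p(x,y)$ is convex, continuous and coercive on the reflexive space $Y$ (because $p>1$ makes the norm term dominate the linear ones), while $x\mapsto\mathcal L_p(x,y)$ is affine and $P$ is closed convex, so Proposition VI.2.3 of \cite{ET74} still applies and yields
\begin{equation*}
\min_{y\in Y}\sup_{x\in P}\mathcal L_p(x,y)=\sup_{x\in P}\inf_{y\in Y}\mathcal L_p(x,y).
\end{equation*}
The next step, and the only place where the argument departs substantively from Lemma \ref{lem_phi01}, is to evaluate the inner infimum by Fenchel duality rather than by Riesz: setting $\ell_x(y):=\lambda L(y)-a(x,y)\in Y^{*}$, so that $\mathcal L_p(x,y)=\tfrac1p\|y\|_Y^p-\ell_x(y)$, the standard conjugate identity $\bigl(\tfrac1p\|\cdot\|_Y^p\bigr)^{*}(\ell)=\tfrac1q\|\ell\|_{Y^*}^q$ with H\"older conjugate $q=p/(p-1)$ gives
\begin{equation*}
\inf_{y\in Y}\mathcal L_p(x,y)=-\tfrac{1}{q}\|\ell_x\|_{Y^{*}}^{q}=-\tfrac{1}{q}\Phi_\lambda(x)^q,
\end{equation*}
the last equality following because, by the symmetry $y\mapsto -y$ inside the defining supremum, $\|\ell_x\|_{Y^{*}}=\Phi_\lambda(x)$. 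This is the main technical step and the place where I expect any subtlety to sit; once it is in place the remainder is routine.

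Taking the supremum over $x\in P$ and combining with the saddle-point identity produces the $p$-analog of the key formula in Lemma \ref{lem_phi01},
\begin{equation*}
\varphi(\lambda)^q=-q\inf_{y\in Y}\Bigl\{\tfrac{1}{p}\|y\|_Y^p+\mathcal J(y)-\lambda L(y)\Bigr\}.
\end{equation*}
From here the characterization \eqref{phi01} follows exactly as before: for $\lambda\le\zeta^{*}$ one has $\mathcal J-\lambda L\ge 0$ on $Y$ and hence $\varphi(\lambda)=0$, while for $\lambda>\zeta^{*}$ one picks $\bar y\in Y$ with $\mathcal J(\bar y)-\lambda L(\bar y)<0$ and tests against $\alpha\bar y$ with $\alpha\downarrow 0$; since $\tfrac1p\|\alpha\bar y\|_Y^p=\tfrac{\alpha^p}{p}\|\bar y\|_Y^p$ is of order $\alpha^p$ whereas the remaining term is linear in $\alpha$ and $p>1$, the bracket becomes strictly negative, so $\varphi(\lambda)>0$. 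With this Banach-space analog of Lemma \ref{lem_phi01} in place, I would conclude by invoking Lemma \ref{lem_equivalence} together with the (unchanged) coercivity bound \eqref{coercivity} and reflexivity of $X$ from (D1): these guarantee existence of a minimizer of $\Phi_\lambda$ on $P$ for every $\lambda\ge 0$, hence $\lambda^{*}=\zeta^{*}$ and, whenever $\lambda^{*}<+\infty$, existence of a solution of the dual problem \eqref{primal_problem}.
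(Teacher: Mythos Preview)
Your proof is correct and follows the same overall architecture as the paper's sketch: replace the quadratic Lagrangian \eqref{Lagr_L} by $\mathcal L_p(x,y)=\tfrac1p\|y\|_Y^p+a(x,y)-\lambda L(y)$, invoke the same saddle-point result \cite[Proposition VI.2.3]{ET74}, show that the inner infimum over $y$ equals $-\tfrac1q\Phi_\lambda(x)^q$, and then reuse Lemmas \ref{lem_Phi_lambda01}--\ref{lem_equivalence} and the coercivity bound \eqref{coercivity} without change. The one substantive difference lies in how the inner infimum is evaluated. The paper writes down the Euler--Lagrange equation for the minimizer $y_x$ explicitly in $W^{1,p}$, namely the $p$-Laplacian type identity \eqref{v_tau2}, and then verifies directly (via a H\"older argument on the two pieces of the $W^{1,p}$ norm) that the dual norm of the right-hand side equals $\|y_x\|_Y^{p-1}$, arriving at \eqref{norm_v_tau2}. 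You bypass this by appealing to the abstract Fenchel conjugate formula $\bigl(\tfrac1p\|\cdot\|_Y^p\bigr)^{*}(\ell)=\tfrac1q\|\ell\|_{Y^*}^q$, valid for any norm. Your route is therefore slightly more general---it shows that (D2) can in fact be relaxed to ``$Y$ is any reflexive Banach space'', so the specific choice $Y=W^{1,p}(\Omega,\mathbb R^m)$ in (D2$'$) is not essential to the argument---whereas the paper's concrete computation makes the mechanism explicit in the particular space at hand and explains why that space was singled out.
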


\noindent
{\it Sketch of the proof:} It suffices to modify formulae (\ref{Lagr_L})--(\ref{phi2}). To this end, we set 
\begin{equation}
\mathcal L(x,y):=\frac{1}{p}\|y\|_Y^p+a(x,y)-\lambda L(y),\quad x\in P,\; y\in Y.
\label{Lagr_L2}
\end{equation}
Then (\ref{duality_L}) holds and there exists a unique element $y_x\in Y$ such that 
$$\mathcal L(x,y_x)\leq \mathcal L(x,y)\quad\forall y\in Y,$$
or equivalently
\begin{equation}
\int_\Omega|\nabla y_x|^{p-2}\nabla y_x:\nabla y+|y_x|^{p-2}y_x\cdot y\,dx=\lambda L(y)-a(x,y)\quad \forall y\in Y.
\label{v_tau2}
\end{equation}
Consequently,
\begin{align}
\|y_x\|_Y^{p-1}=\sup_{\substack{y\in Y\\ y\neq 0_Y}}\frac{\int_\Omega|\nabla y_x|^{p-2}\nabla y_x:\nabla y+|y_x|^{p-2}y_x\cdot y\,dx}{\|y\|_Y}=\sup_{\substack{y\in Y\\ y\neq 0_Y}}\frac{a(x,y)-\lambda L(y)}{\|y\|_Y}=\Phi_\lambda(x)
\label{norm_v_tau2}
\end{align}
and
\begin{equation}
\sup_{x\in P}\inf_{y\in Y}\mathcal L(x,y)=\sup_{x\in P}\Big\{-\frac{1}{q}\|y_x\|^p_Y\Big\}\stackrel{(\ref{norm_v_tau2})}{=}-\frac{1}{q}\inf_{x\in P}\Phi_\lambda^q(x)=-\frac{1}{q}\left(\inf_{x\in P}\Phi_\lambda(x)\right)^q=-\frac{1}{q}\varphi^q(\lambda),
\label{si=is2}
\end{equation}
where $1/q=1-1/p$. The rest of the proof is analogous to that of Section \ref{sec_proof}. \qed

The third extension illustrates that Theorem \ref{theorem_duality3} remains valid even if the space $Y$ is replaced by its conic subset.

\begin{theorem}
	Let the assumptions (A1)--(A4) and (D1)--(D3) of Theorem \ref{theorem_duality3} be satisfied, $Y_C\subset Y$ be a closed convex cone, and
		\begin{equation}
		\inf_{\substack{x_C\in P_{C}\\ x_C\neq0_X}}\ \sup_{\substack{y\in Y_C\\ y\neq 0_Y}}\ \frac{-a(x_C,y)}{\|x_C\|_X\|y\|_Y} =c_*>0.
		\label{inf-sup_abstract2}
		\end{equation}
Then
\begin{equation}
\lambda^*:=\sup_{x\in P}\inf_{\substack{y\in Y_C\\ L(y)=1}}\ a(x,y) =\inf_{\substack{y\in Y_C\\ L(y)=1}}\sup_{x\in P}\ a(x,y)=:\zeta^*.
\label{duality_problem2}
\end{equation}
In addition, $\lambda^*=\max\{\lambda\in\mathbb R_+\ |\;\; P\cap\Lambda_{\lambda}\neq\emptyset\}$, where $$\Lambda_{\lambda}=\{x\in X\ |\; a(x,y)\geq\lambda L(y)\;\forall y\in Y_C\}.$$
\label{theorem_contact}
\end{theorem}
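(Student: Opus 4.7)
The plan is to mirror the proof of Theorem \ref{theorem_duality3} in Section \ref{sec_proof}, replacing the test space $Y$ throughout by the closed convex cone $Y_C$ and reversing the sign in the numerator of the auxiliary function $\Phi_\lambda$ to match the sign built into \eqref{inf-sup_abstract2}. Concretely, I would work with the Lagrangian $\mathcal L(x,y)=\tfrac12\|y\|_Y^2+a(x,y)-\lambda L(y)$ viewed on $P\times Y_C$ and with
$$
\Phi_\lambda(x):=\left(-2\inf_{y\in Y_C}\mathcal L(x,y)\right)^{1/2},\qquad \varphi(\lambda):=\inf_{x\in P}\Phi_\lambda(x).
$$
The analogues of Lemma \ref{lem_Phi_lambda01} (nonnegativity, convexity, Lipschitz continuity, monotonicity of $\varphi$) transfer unchanged, and the coercivity computation \eqref{coercivity} transcribes directly: splitting $x=x_A+x_C$ via (D3) and testing with an almost-optimal $y^*\in Y_C$ in \eqref{inf-sup_abstract2} yields $\Phi_\lambda(x)\geq c_*\|x\|_X-(c_*+\|a\|)\rho_A-\lambda\|L\|_{Y^*}$, so $\Phi_\lambda$ is coercive on $P$ for every $\lambda\geq 0$.

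The genuinely new step is the cone version of Lemma \ref{lem_phi01}. For each $x\in P$, the strongly convex continuous functional $y\mapsto\mathcal L(x,y)$ attains its minimum on the closed convex cone $Y_C$ at a unique element $y_x$, characterized by the variational inequality
$$
(y_x,y)_Y+a(x,y)-\lambda L(y)\geq 0\qquad\forall y\in Y_C.
$$
Testing this VI with $y=2y_x$ and with $y=0_Y$, both permissible because $Y_C$ is a cone with vertex at $0_Y$, forces the energy identity $\|y_x\|_Y^2=\lambda L(y_x)-a(x,y_x)$; hence $\inf_{y\in Y_C}\mathcal L(x,y)=-\tfrac12\|y_x\|_Y^2$ and $\Phi_\lambda(x)=\|y_x\|_Y$. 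Since $\mathcal L$ is convex--continuous and coercive in $y\in Y_C$ and affine in $x\in P$, \cite[Prop.~VI.2.3]{ET74} delivers $-\tfrac12\varphi^2(\lambda)=\inf_{y\in Y_C}\{\tfrac12\|y\|_Y^2+\mathcal J(y)-\lambda L(y)\}$, and $\varphi(\lambda)=0\Leftrightarrow\lambda\leq\zeta^*$ then follows from the positive homogeneity of $\mathcal J$ exactly as in Lemma \ref{lem_phi01}.

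Assembling these ingredients, reflexivity of $X$ produces a minimizer $x_\lambda\in P$ of $\Phi_\lambda$ for every $\lambda\geq 0$, and the cone analogue of Lemma \ref{lem_equivalence} then yields $\lambda^*=\zeta^*$. The final maximum statement is immediate: $\Phi_{\lambda^*}(x_{\lambda^*})=\varphi(\lambda^*)=0$ forces $y_{x_{\lambda^*}}=0_Y$, so the VI collapses to $a(x_{\lambda^*},y)\geq\lambda^*L(y)$ for every $y\in Y_C$, placing $x_{\lambda^*}$ in $P\cap\Lambda_{\lambda^*}$. The main obstacle I anticipate is precisely the transition from equation to variational inequality on the cone: on a proper cone the symmetry $y\leftrightarrow -y$ exploited in \eqref{v_tau}--\eqref{norm_v_tau} is lost, and the identification $\Phi_\lambda(x)=\|y_x\|_Y$ must be extracted from the VI itself rather than from the linear representation available in the full Hilbert-space case.
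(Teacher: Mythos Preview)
Your approach is essentially identical to the paper's sketch: define $\Phi_\lambda$ via the Lagrangian restricted to $Y_C$, extract $\Phi_\lambda(x)=\|y_x\|_Y$ from the cone optimality conditions, and carry over the coercivity and the $\varphi$--analysis unchanged. One slip, however, sits exactly at the point you flag as delicate: the inequality you write, $(y_x,y)_Y+a(x,y)-\lambda L(y)\geq 0$ for all $y\in Y_C$, is \emph{not} the characterization of the minimizer on a cone, and testing it with $y=0_Y$ gives only $0\geq 0$, so the energy identity does not follow from your two tests. The correct first-order condition is the standard VI $(y_x,y-y_x)_Y+a(x,y-y_x)-\lambda L(y-y_x)\geq 0$ for all $y\in Y_C$; testing \emph{this} with $y=0_Y$ and $y=2y_x$ yields the two opposite inequalities and hence $\|y_x\|_Y^2=\lambda L(y_x)-a(x,y_x)$. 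Equivalently --- and this is precisely how the paper phrases it --- on a cone the optimality condition splits into the inequality $(y_x,y)_Y\geq \lambda L(y)-a(x,y)$ for all $y\in Y_C$ \emph{together with} the complementarity $(y_x,y_x)_Y=\lambda L(y_x)-a(x,y_x)$; the latter is the missing half. With this correction your proposal coincides with the paper's argument.
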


\noindent
{\it Sketch of the proof:} The following two changes in the proof of Theorem \ref{theorem_duality3} are necessary: 
\begin{enumerate}
	\item We define
	\begin{equation*}
	\Phi_\lambda(x):=\sup_{\substack{y\in Y_C\\ y\neq 0_Y}}\frac{-a(x,y)+\lambda L(y)}{\|y\|_Y}
	\label{Phi}
	\end{equation*}
	and consider the minimization problem
	$$\mbox{find } y_x\in Y_C:\quad \mathcal L(x,y_x)\leq \mathcal L(x,y)\quad\forall y\in Y_C$$
	with $\mathcal L$ defined by \eqref{Lagr_L}. Since $Y_C$ is a closed convex cone, the corresponding necessary and sufficient condition characterising $y_x$ reads 
	\begin{equation*}
	(y_x,y)_Y\geq \lambda L(y)-a(x,y)\quad \forall y\in Y_C,\quad (y_x,y_x)_Y=\lambda L(y_x)-a(x,y_x).
	\label{v_tau3}
	\end{equation*}
	We obtain
	\begin{equation*}
	\|y_x\|_Y=\sup_{\substack{y\in Y_C\\ y\neq 0_Y}}\frac{(y_x,y)_Y}{\|y\|_Y}\geq\sup_{\substack{y\in Y_C\\ y\neq 0_Y}}\frac{-a(x,y)+\lambda L(y)}{\|y\|_Y}\geq\frac{-a(x,y_x)+\lambda L(y_x)}{\|y_x\|_Y}=\|y_x\|_Y,
	\label{norm_v_tau3}
	\end{equation*}
	so that $\|y_x\|_Y=\Phi_\lambda(x)$.
	\item To prove Lemma \ref{lem_equivalence}, we modify \eqref{primal_problem} as follows:
	$$\lambda^*=\sup\{\lambda\in\mathbb R_+\ |\;\; P\cap \Lambda_\lambda\neq\emptyset\},\quad \Lambda_\lambda:=\{x\in X\ |\;\; a(x,y)\geq\lambda L(y)\;\;\forall y\in Y_C\}.$$
\end{enumerate}
Then the proof of Theorem \ref{theorem_duality3} is applicable without any substantial changes.


\section{Regularization method}
\label{sec_regularization}

Regularization methods are often used for solving nonsmooth, constrained, or ill-posed problems. As an example, we mention proximal point methods \cite{Parikh_Boyd_2014} which can be used for solving the problems \eqref{primal_problem} and \eqref{dual_problem}.

Here we consider another regularization method which has been subsequently developed in \cite{SHHC15,CHKS15,HRS15,HRS16} and used also in \cite{RSH18,HRS19}. In these recent papers, this method has been
	called either the ``indirect incremental method" or the ``penalization method". Below, we generalize, results of \cite{HRS15,HRS16} and show that some of these can be established in a simpler way. Within this section it is assumed that the conditions (A1)--(A4) from Section \ref{sec_analysis} hold.

To regularize the functional $\mathcal J$ defined by \eqref{J0} we introduce the functional
\begin{equation}
\mathcal J_\alpha\colon Y\rightarrow \mathbb R,\qquad \mathcal J_\alpha(y):=\max_{x\in P}\ \left\{a(x,y)-\frac{1}{2\alpha}\|x\|_X^2\right\},
\label{J_alpha}
\end{equation}
where $\alpha>0$ is a given parameter.
It is easy to see that $\mathcal J_\alpha$ is convex and {\it finite-valued} in $Y$ (unlike the functional $\mathcal J$) and $\mathcal J_{\alpha_1}\leq\mathcal J_{\alpha_2}\leq \mathcal J$ for any $\alpha_1,\alpha_2>0$, $\alpha_1\leq\alpha_2$. 

\begin{lemma}
	Let $\mathcal J$ and $\mathcal J_\alpha$ be defined by \eqref{J0} and \eqref{J_alpha}. Then
	\begin{equation}
	\lim_{\alpha\rightarrow+\infty}\mathcal J_\alpha(y)=\mathcal J(y)\quad\forall y\in Y.
	\label{J_alpha_lim}
	\end{equation}
\end{lemma}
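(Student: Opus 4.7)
The plan is to establish two inequalities separately: $\limsup_{\alpha\to+\infty}\mathcal{J}_\alpha(y)\leq\mathcal{J}(y)$ and $\liminf_{\alpha\to+\infty}\mathcal{J}_\alpha(y)\geq\mathcal{J}(y)$, then combine them. Together with the monotonicity $\alpha_1\leq\alpha_2\;\Rightarrow\;\mathcal{J}_{\alpha_1}\leq\mathcal{J}_{\alpha_2}$ already recorded in the text, these imply that the monotone limit exists in $\mathbb{R}\cup\{+\infty\}$ and coincides with $\mathcal{J}(y)$.

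The upper bound is essentially free: for every $\alpha>0$ and every $x\in P$, $a(x,y)-\frac{1}{2\alpha}\|x\|_X^2\leq a(x,y)$, so taking the supremum over $P$ yields $\mathcal{J}_\alpha(y)\leq\mathcal{J}(y)$ pointwise. Passing to the limit along any sequence $\alpha_n\to+\infty$ preserves this, giving the $\limsup$ inequality.

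The lower bound is the substantive step, but is still short. I fix $y\in Y$ and any $t\in\mathbb{R}$ with $t<\mathcal{J}(y)$. By definition of $\mathcal{J}(y)$ as a supremum, there exists $x_t\in P$ with $a(x_t,y)>t$. Using $x_t$ as a trial element in the $\max$ defining $\mathcal{J}_\alpha(y)$ yields
\begin{equation*}
\mathcal{J}_\alpha(y)\;\geq\;a(x_t,y)-\frac{1}{2\alpha}\|x_t\|_X^2.
\end{equation*}
Since $x_t$ is fixed and $\alpha\to+\infty$, the penalty term vanishes, so $\liminf_{\alpha\to+\infty}\mathcal{J}_\alpha(y)\geq a(x_t,y)>t$. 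As $t<\mathcal{J}(y)$ was arbitrary, this gives $\liminf_{\alpha\to+\infty}\mathcal{J}_\alpha(y)\geq\mathcal{J}(y)$. The same argument handles simultaneously the case $\mathcal{J}(y)<+\infty$ (by letting $t\nearrow\mathcal{J}(y)$) and the case $\mathcal{J}(y)=+\infty$ (by taking $t$ arbitrarily large).

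There is really no obstacle here: the statement is the pointwise form of the standard Moreau–Yosida-type convergence of a perturbed supremum to the original supremum, and the argument requires neither that $P$ be bounded, nor that the maximizer in \eqref{J_alpha} be characterized explicitly, nor any continuity of $\mathcal{J}$. The only care needed is to formulate the lower-bound step through an arbitrary $t<\mathcal{J}(y)$ so that the proof is uniform in whether $\mathcal{J}(y)$ is finite or infinite.
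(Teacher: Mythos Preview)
Your proof is correct and follows essentially the same approach as the paper: both use $\mathcal J_\alpha(y)\leq\mathcal J(y)$ for the upper bound, and for the lower bound insert a fixed trial element $x\in P$ into the definition of $\mathcal J_\alpha(y)$ and let the penalty term vanish as $\alpha\to+\infty$. The paper phrases the lower bound slightly more directly (for every $x\in P$, $\lim_\alpha \mathcal J_\alpha(y)\geq a(x,y)$, then take the supremum over $x$), whereas you route it through an arbitrary $t<\mathcal J(y)$, but the content is the same.
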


\begin{proof}
	Let $y\in Y$ be fixed. As mentioned above, the sequence $\{\mathcal J_\alpha(y)\}_\alpha$ is nondecreasing. Therefore, it has a limit which is less than or equal to $\mathcal J(y)$. On the other hand, 
	$$\lim_{\alpha\rightarrow+\infty}\mathcal J_\alpha(y)\geq \lim_{\alpha\rightarrow+\infty}\left\{a(x,y)-\frac{1}{2\alpha}\|x\|_X^2\right\}=a(x,y)\quad\forall x\in P.$$
	Thus \eqref{J_alpha_lim} holds.
\end{proof}

The regularization of the primal problem \eqref{dual_problem} with respect to the parameter $\alpha$ defines the function $\psi:\mathbb R_+\rightarrow\mathbb R_+$ : 
\begin{equation}
\psi(\alpha):=\inf_{\substack{y\in Y\\ L(y)=1}}\mathcal J_\alpha(y),\qquad \alpha>0.
\label{psi}
\end{equation}
In view of \eqref{J_alpha} and \cite[Proposition VI 2.3]{ET74}, it holds
\begin{equation}
\psi(\alpha)=\inf_{\substack{y\in Y\\ L(y)=1}}\max_{x\in P}\ \left\{a(x,y)-\frac{1}{2\alpha}\|x\|_X^2\right\}=\max_{x\in P}\inf_{\substack{y\in Y\\ L(y)=1}}\ \left\{a(x,y)-\frac{1}{2\alpha}\|x\|_X^2\right\}.
\label{psi2}
\end{equation}
Thus, the main duality relation holds without any gap, unlike the original primal-dual problem \eqref{duality_problem}. The properties of the function $\psi$ are set out in the following theorem.

\begin{theorem}
	The function $\psi$ is continuous, nondecreasing and 
	\begin{equation}
	\lim_{\alpha\rightarrow+\infty}\psi(\alpha)=\lambda^*\leq\zeta^*,
	\label{psi_lim}
	\end{equation}
	where $\lambda^*$ and $\zeta^*$ are defined by \eqref{primal_problem} and \eqref{dual_problem}, respectively.
	\label{theorem_psi}	
\end{theorem}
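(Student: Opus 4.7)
The plan is to establish the four distinct assertions—monotonicity, the weak-duality bound $\lambda^*\le\zeta^*$, the limit identity, and continuity—by exploiting the dual formulation \eqref{psi2} together with an explicit description of the inner infimum through the functional $\mathcal I$ defined in \eqref{I}.

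First, monotonicity is immediate from the inequality $\mathcal J_{\alpha_1}\le\mathcal J_{\alpha_2}$ for $\alpha_1\le\alpha_2$, recorded just after \eqref{J_alpha}: taking the infimum over $\{y\in Y:L(y)=1\}$ on both sides preserves the ordering. The bound $\lambda^*\le\zeta^*$ is the weak-duality inequality already recorded in the introduction after \eqref{duality_problem}.

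Next, for the limit, I would rewrite \eqref{psi2} as
\begin{equation*}
\psi(\alpha)=\sup_{x\in P}\Big\{\mathcal I(x)-\frac{1}{2\alpha}\|x\|_X^2\Big\},
\end{equation*}
using that the inner infimum over $\{y:L(y)=1\}$ of $a(x,\cdot)$ equals $\mathcal I(x)$ by \eqref{I}, while the penalty term does not depend on $y$. Dropping the non-positive penalty yields the upper bound $\psi(\alpha)\le\sup_{x\in P}\mathcal I(x)=\lambda^*$ by \eqref{primal_problem}. For the matching asymptotic lower bound, I would fix any $x\in P$ with $\mathcal I(x)>-\infty$, use $\psi(\alpha)\ge\mathcal I(x)-\|x\|_X^2/(2\alpha)$, let $\alpha\to+\infty$, and then take the supremum over such $x$ (on the rest of $P$ the quantity $\mathcal I$ is $-\infty$ and does not contribute), obtaining $\liminf_{\alpha\to\infty}\psi(\alpha)\ge\lambda^*$. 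Monotonicity then promotes the $\liminf$ to a genuine limit and forces equality with $\lambda^*$.

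Continuity is obtained via the substitution $\beta:=1/\alpha\in(0,+\infty)$. The representation above then reads
\begin{equation*}
\psi(1/\beta)=\sup_{x\in P}\Big\{\mathcal I(x)-\frac{\beta}{2}\|x\|_X^2\Big\},
\end{equation*}
a pointwise supremum of affine (hence convex) functions of $\beta$, and therefore convex on $(0,+\infty)$. Convex functions on an open interval are continuous at every point where they are finite, so $\alpha\mapsto\psi(\alpha)$ is continuous wherever finite. The main obstacle I expect is purely cosmetic: handling the degenerate case $\lambda^*=+\infty$, in which $\psi$ may genuinely blow up on a tail. There, continuity has to be read in the extended-real sense and follows directly from monotonicity combined with the convexity argument above.
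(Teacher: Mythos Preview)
Your argument is correct. The monotonicity and limit steps are essentially identical to the paper's (you phrase the limit via the functional $\mathcal I$, while the paper keeps the inner $\inf$ explicit, but it is the same computation).

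The continuity step is where you genuinely diverge. The paper exploits the convexity of $P$ together with $0_X\in P$: for $\beta>\alpha$ it substitutes $(\alpha/\beta)x$ for $x$ in \eqref{psi2} to obtain the two-sided estimate $\tfrac{\alpha}{\beta}\psi(\beta)\le\psi(\alpha)\le\psi(\beta)$ and squeezes out continuity from the left and right. Your route is to reparametrise by $\beta=1/\alpha$ and observe that $\beta\mapsto\psi(1/\beta)=\sup_{x\in P}\{\mathcal I(x)-\tfrac{\beta}{2}\|x\|_X^2\}$ is a pointwise supremum of affine functions of $\beta$, hence convex, hence continuous wherever finite. This is shorter and uses only standard convex-analysis facts; the paper's argument, on the other hand, delivers an explicit quantitative estimate and makes transparent which structural property of $P$ is being used. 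Both are valid.

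One small correction: your stated worry about the case $\lambda^*=+\infty$ is unnecessary. From the primal definition \eqref{psi}, $\psi(\alpha)=\inf_{L(y)=1}\mathcal J_\alpha(y)$ with $\mathcal J_\alpha$ finite-valued and $\{y:L(y)=1\}\neq\emptyset$ (since $L\neq0$), so $\psi(\alpha)<+\infty$ for every $\alpha\in(0,+\infty)$ regardless of the value of $\lambda^*$. Thus $\beta\mapsto\psi(1/\beta)$ is a convex real-valued function on the open interval $(0,+\infty)$, and ordinary real-valued continuity follows without any extended-real caveats.
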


\begin{proof}
	From the properties of $\{\mathcal J_\alpha\}_{\alpha>0}$, it is easy to see that $\psi$ is nondecreasing and thus it has a limit as $\alpha\rightarrow+\infty$. Comparing \eqref{primal_problem} and \eqref{psi2}$_3$ we see that $\psi(\alpha)\leq\lambda^*$. In addition, for any $x\in P$ we have
	$$\lim_{\alpha\rightarrow+\infty}\psi(\alpha)\stackrel{\eqref{psi2}}{\geq}\lim_{\alpha\rightarrow+\infty}\inf_{\substack{y\in Y\\ L(y)=1}}\ \left\{a(x,y)-\frac{1}{2\alpha}\|x\|_X^2\right\}=\inf_{\substack{y\in Y\\ L(y)=1}}\ a(x,y).$$
	Making use of the definition of $\lambda^*$, we arrive at \eqref{psi_lim}.
	
	Let $\beta>\alpha$. Since  $0_X\in P$, we have $(\alpha/\beta) x\in P$ if $x\in P$.
	Hence,
	$$\psi(\alpha)\stackrel{\eqref{psi2}}{\geq}\inf_{\substack{y\in Y\\ L(y)=1}}\max_{x\in P}\ \left\{a((\alpha/\beta) x,y)-\frac{1}{2\alpha}\|(\alpha/\beta) x\|_X^2\right\}=\frac{\alpha}{\beta} \psi(\beta).$$
	This relation and the monotonicity of $\psi$ imply
	\begin{equation*}
	\frac{\alpha}{\beta} \psi(\beta)\leq \psi(\alpha)\leq \psi(\beta).
	\label{alpha_beta}
	\end{equation*}
	Hence,
	\begin{equation}
	\limsup_{\beta\searrow\alpha}\psi(\beta)=\limsup_{\beta\searrow\alpha}\frac{\alpha}{\beta} \psi(\beta)\leq \psi(\alpha)\leq \liminf_{\beta\searrow\alpha}\psi(\beta).
	\label{alpha_beta2}
	\end{equation}
	
	Let $\beta<\alpha$. By interchanging $\alpha$ and $\beta$ in \eqref{alpha_beta}, we obtain
	\begin{equation*}
	\frac{\beta}{\alpha} \psi(\alpha)\leq \psi(\beta)\leq \psi(\alpha)\quad\mbox{or}\quad \psi(\beta)\leq \psi(\alpha)\leq \frac{\alpha}{\beta} \psi(\beta).
	\label{beta_alpha}
	\end{equation*}
	Hence,
	\begin{equation}
	\limsup_{\beta\nearrow\alpha}\psi(\beta)\leq \psi(\alpha)\leq \limsup_{\beta\nearrow\alpha}\frac{\alpha}{\beta} \psi(\beta)=\liminf_{\beta\nearrow\alpha}\psi(\beta).
	\label{beta_alpha2}
	\end{equation}
	From \eqref{alpha_beta2} and \eqref{beta_alpha2}, we have
	$$\limsup_{\beta\rightarrow\alpha}\psi(\beta)\leq \psi(\alpha)\leq \liminf_{\beta\rightarrow\alpha}\psi(\beta)\quad\mbox{or}\quad \lim_{\beta\rightarrow\alpha}\psi(\beta)= \psi(\alpha),$$
	implying the continuity of $\psi$.
\end{proof}

It is worth noting that for any value of $\alpha>0$, the quantity $\psi(\alpha)$ is a lower bound of $\lambda^*$ and $\zeta^*$. Upper bounds of $\lambda^*$ and $\zeta^*$ will be derived in the next section.

From the numerical point of view, it is useful if the functional $\mathcal J_\alpha$ is differentiable in the G\^ateaux sense. Below we establish this property of the regularized functional.

\begin{lemma}
	Let $X$ be a Hilbert space with the scalar product $(.,.)_X$ and define 
	\begin{equation}
	\Pi_\alpha\colon Y\rightarrow P,\qquad \Pi_\alpha y:=\mathrm{arg}\max_{x\in P}\ \left\{a(x,y)-\frac{1}{2\alpha}\|x\|_X^2\right\}.
	\label{Pi_alpha}
	\end{equation}	
	Then $\Pi_\alpha$ is Lipschitz continuous in $Y$ and
	\begin{equation}
	\mathcal J'_\alpha(y;z):=\lim_{t\rightarrow0}\frac{1}{t}[\mathcal J_\alpha(y+tz)-\mathcal J_\alpha(y)]=a(\Pi_\alpha y,z)\quad\forall\alpha>0,\;\;\forall y,z\in Y.
	\label{J_deriv}
	\end{equation}
	\label{lem_G-deriv}
\end{lemma}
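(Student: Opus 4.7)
The plan is to handle the two claims in sequence, deriving Lipschitz continuity of $\Pi_\alpha$ first and then using it as a continuity hook for an envelope-type argument that yields the G\^ateaux derivative formula.

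First I would verify that $\Pi_\alpha y$ is well defined. For fixed $y\in Y$, the map $x\mapsto a(x,y)-\tfrac{1}{2\alpha}\|x\|_X^2$ is strictly concave (because $X$ is Hilbert so $\|\cdot\|_X^2$ is strictly convex), weakly upper semicontinuous, and coercive (the quadratic term dominates the linear one), so its maximum over the closed convex set $P$ is attained at a unique point. The first-order optimality condition characterising $x^*=\Pi_\alpha y$ as a maximiser of a concave function on a convex set is the variational inequality
\begin{equation*}
a(x-x^*,y)-\tfrac{1}{\alpha}(x^*,x-x^*)_X\leq 0\qquad\forall x\in P.
\end{equation*}

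Next I would prove Lipschitz continuity. Writing $x_i:=\Pi_\alpha y_i$ for $i=1,2$, I would test the variational inequality for $y_1$ with $x=x_2$ and that for $y_2$ with $x=x_1$, then add the two inequalities. The cross term $-\tfrac{1}{\alpha}(x_1,x_2-x_1)_X-\tfrac{1}{\alpha}(x_2,x_1-x_2)_X$ collapses into $\tfrac{1}{\alpha}\|x_1-x_2\|_X^2$, and after using continuity of $a$ one obtains
\begin{equation*}
\tfrac{1}{\alpha}\|x_1-x_2\|_X^2\leq a(x_1-x_2,y_1-y_2)\leq \|a\|\,\|x_1-x_2\|_X\,\|y_1-y_2\|_Y,
\end{equation*}
hence $\|\Pi_\alpha y_1-\Pi_\alpha y_2\|_X\leq \alpha\|a\|\,\|y_1-y_2\|_Y$. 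This is the monotonicity argument standard for resolvents of maximal monotone operators, and it is the step where the Hilbert structure of $X$ is essential.

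Finally I would derive \eqref{J_deriv} by a two-sided envelope estimate. For any $y,z\in Y$ and $t\in\mathbb R$, testing the definition of $\mathcal J_\alpha$ once with $x=\Pi_\alpha y$ and once with $x=\Pi_\alpha(y+tz)$, and exploiting that the penalty term $\tfrac{1}{2\alpha}\|x\|_X^2$ does not depend on $y$, yields
\begin{equation*}
t\,a(\Pi_\alpha y,z)\;\leq\;\mathcal J_\alpha(y+tz)-\mathcal J_\alpha(y)\;\leq\;t\,a(\Pi_\alpha(y+tz),z).
\end{equation*}
Dividing by $t>0$ or $t<0$ (with the obvious reversal of inequalities) and letting $t\to 0$, the Lipschitz continuity of $\Pi_\alpha$ together with continuity of $a$ forces $a(\Pi_\alpha(y+tz),z)\to a(\Pi_\alpha y,z)$, so both bounds squeeze the difference quotient to $a(\Pi_\alpha y,z)$, proving \eqref{J_deriv}.

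The main obstacle is the Lipschitz step; everything after it is essentially the Danskin/envelope theorem applied on a convex set. The coercivity of the penalty and the Hilbert inner product identity for the cross term are what make the monotonicity bound clean, and I would want to isolate that computation carefully since the rest of the argument piggy-backs on it.
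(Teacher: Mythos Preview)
Your proposal is correct and essentially identical to the paper's proof: the paper also characterises $\Pi_\alpha y$ by the variational inequality, adds the two inequalities obtained by cross-testing to get the Lipschitz bound $\|\Pi_\alpha(y+tz)-\Pi_\alpha y\|_X\leq \alpha\|a\|\,|t|\,\|z\|_Y$, and then uses the same two-sided envelope estimate to squeeze the difference quotient. The only cosmetic difference is that the paper runs the Lipschitz argument directly with the pair $(y,\,y+tz)$ rather than general $(y_1,y_2)$, and is slightly less explicit than you are about the sign of $t$ when dividing.
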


\begin{proof}
	Since $X$ is a Hilbert space, it is easy to see that there exists a unique $\Pi_\alpha y$ solving \eqref{Pi_alpha} and satisfying the variational inequality
	\begin{equation}
	\frac{1}{\alpha}(\Pi_\alpha y, x-\Pi_\alpha y)_X\geq a(x-\Pi_\alpha y, y)\quad \forall x\in P,
	\;\;\forall y\in Y.
	\label{Pi_alpha2}
	\end{equation}
	Hence, we derive the inequalities
	\begin{align*}
	\frac{1}{\alpha}(\Pi_\alpha y, \Pi_\alpha (y+tz)-\Pi_\alpha y)_X &\geq a(\Pi_\alpha (y+tz)-\Pi_\alpha y, y),\\
	\frac{1}{\alpha}(\Pi_\alpha (y+tz), \Pi_\alpha y-\Pi_\alpha (y+tz))_X &\geq a(\Pi_\alpha y-\Pi_\alpha (y+tz), y+tz),
	\end{align*}
	which hold for any $y,z\in Y$ and any $t\in\mathbb R$. By adding these inequalities, we obtain
	$$\frac{1}{\alpha}\|\Pi_\alpha (y+tz)-\Pi_\alpha y\|_X^2\leq ta(\Pi_\alpha (y+tz)-\Pi_\alpha y,z)\leq t\|a\|\|\Pi_\alpha (y+tz)-\Pi_\alpha y\|_X\|z\|_Y.$$
	Thus $\Pi_\alpha$ is Lipschitz continuous in $Y$.
	
	From \eqref{J_alpha} and \eqref{Pi_alpha} we have, for any $t\in \mathbb R$ and any $y,z\in Y$,
	$$\mathcal J_\alpha(y)=a(\Pi_\alpha y,y)-\frac{1}{2\alpha}\|\Pi_\alpha y\|_X^2\geq a(\Pi_\alpha (y+tz),y)-\frac{1}{2\alpha}\|\Pi_\alpha (y+tz)\|_X^2,$$
	$$\mathcal J_\alpha(y+tz)=a(\Pi_\alpha (y+tz),y+tz)-\frac{1}{2\alpha}\|\Pi_\alpha (y+tz)\|_X^2\geq a(\Pi_\alpha y,y+tz)-\frac{1}{2\alpha}\|\Pi_\alpha y\|_X^2.$$
	Hence,
	$$a(\Pi_\alpha y,z)\leq\frac{1}{t}[\mathcal J_\alpha(y+tz)-\mathcal J_\alpha(y)]\leq a(\Pi_\alpha (y+tz),z),$$
	proving \eqref{J_deriv}.
\end{proof}


Using the differentiability of $\mathcal J_\alpha$, one can rewrite the problem \eqref{psi} as a system of nonlinear variational equations.
\begin{theorem}
	Let $X$ be a Hilbert space with the scalar product $(.,.)_X$ and let $y_\alpha$ be a minimizer in \eqref{psi}. Then there exists $\lambda_\alpha\in\mathbb R_+$ such that the pair $(y_\alpha,\lambda_\alpha)$ is a solution of the system:
	\begin{equation}
	\left.\begin{array}{c}
	a(\Pi_\alpha y_\alpha,z)=\lambda_\alpha L(z)\quad\forall z\in Y,\\
	L(y_\alpha)=1.
	\end{array}\right\}
	\label{system}
	\end{equation}
	\label{theorem_G-deriv}
	Conversely, if $(y_\alpha,\lambda_\alpha)$ is a solution to \eqref{system} then $y_\alpha$ solves \eqref{psi}.
\end{theorem}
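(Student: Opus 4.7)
The plan is to combine the G-differentiability of $\mathcal{J}_\alpha$ established in Lemma \ref{lem_G-deriv} with the convexity of $\mathcal{J}_\alpha$ and the affine nature of the constraint $L(y)=1$, reducing the constrained optimization \eqref{psi} to a standard Lagrange-multiplier characterization. Because $\mathcal{J}_\alpha$ is convex and G-differentiable, the first-order condition is both necessary and sufficient for optimality, which handles the two implications of the theorem symmetrically.

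For the forward implication, suppose $y_\alpha$ minimizes $\mathcal{J}_\alpha$ subject to $L(y)=1$. For any $z \in \ker L$ the line $y_\alpha + tz$ remains feasible for every $t \in \mathbb{R}$, so the map $t \mapsto \mathcal{J}_\alpha(y_\alpha + tz)$ attains its minimum at $t=0$; Lemma \ref{lem_G-deriv} then forces $a(\Pi_\alpha y_\alpha, z)=0$ on $\ker L$. Since $L \neq 0$ and $\ker L$ has codimension one in $Y$, the linear functional $z \mapsto a(\Pi_\alpha y_\alpha, z)$ must be a scalar multiple of $L$, say $\lambda_\alpha L$, providing the first equation of \eqref{system}; testing with $z = y_\alpha$ identifies $\lambda_\alpha = a(\Pi_\alpha y_\alpha, y_\alpha)$. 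To verify $\lambda_\alpha \geq 0$, I would insert the test element $x = 0_X$ (which belongs to $P$ by assumption (A4)) into the variational inequality \eqref{Pi_alpha2} characterizing $\Pi_\alpha y_\alpha$; this yields
$$a(\Pi_\alpha y_\alpha, y_\alpha) \;\geq\; \frac{1}{\alpha}\|\Pi_\alpha y_\alpha\|_X^2 \;\geq\; 0.$$

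For the converse, suppose $(y_\alpha, \lambda_\alpha)$ solves \eqref{system}. By convexity of $\mathcal{J}_\alpha$ and Lemma \ref{lem_G-deriv}, the subgradient inequality $\mathcal{J}_\alpha(y) \geq \mathcal{J}_\alpha(y_\alpha) + a(\Pi_\alpha y_\alpha, y - y_\alpha)$ holds for every $y \in Y$ (it follows by dividing $\mathcal{J}_\alpha(y_\alpha + t(y-y_\alpha)) \leq (1-t)\mathcal{J}_\alpha(y_\alpha) + t\mathcal{J}_\alpha(y)$ by $t>0$ and letting $t \searrow 0$). Applied to any feasible $y$ and combined with the first equation of \eqref{system}, this gives $\mathcal{J}_\alpha(y) \geq \mathcal{J}_\alpha(y_\alpha) + \lambda_\alpha (L(y) - L(y_\alpha)) = \mathcal{J}_\alpha(y_\alpha)$, so $y_\alpha$ solves \eqref{psi}.

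The whole argument is essentially a routine application of convex calculus once Lemma \ref{lem_G-deriv} is available, and the main obstacle is comparatively minor: extracting the sign $\lambda_\alpha \geq 0$ demanded by the statement. That step relies crucially on the hypothesis $0_X \in P$ from (A4), without which the multiplier could in principle take any real value; everything else reduces to elementary linear-algebraic manipulation of the kernel of $L$ and the standard convex-subgradient inequality.
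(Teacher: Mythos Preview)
The paper does not actually supply a proof of this theorem: the statement is given immediately after Lemma~\ref{lem_G-deriv} and is followed directly by a remark, so the authors evidently regard it as a routine consequence of the G\^ateaux differentiability just established together with standard Lagrange-multiplier reasoning for a convex functional under an affine constraint.

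Your argument is correct and is precisely the sort of proof the paper is implicitly invoking. The forward direction---vanishing of the derivative along $\ker L$, then the codimension-one argument to identify the multiplier---is the standard route. Your extraction of the sign $\lambda_\alpha\geq 0$ via the variational inequality \eqref{Pi_alpha2} with the test element $x=0_X\in P$ is the natural way to use assumption~(A4) here, and the converse via the convex subgradient inequality is likewise standard. Nothing is missing; this is exactly the kind of detail the paper chose to omit.
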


\begin{remark}
	\emph{In \cite{HRS15}, the function $\tilde\psi\colon\alpha\mapsto \lambda_\alpha$ was introduced and analysed for the case of Hencky plasticity. It is worth noticing that this function is well defined even if \eqref{psi} does not have a minimizer in $Y$. In addition, $\tilde\psi$ is continuous and nondecreasing, with $\psi(\alpha)\leq\tilde\psi(\alpha)\leq\lambda^*$ for any $\alpha>0$, and $\tilde\psi(\alpha)\rightarrow\lambda^*$ as $\alpha\rightarrow+\infty$. One can expect that these considerations from \cite{HRS15} can be extended to our abstract problem.}
\end{remark}


\section{A computable majorant of $\zeta^*$}
\label{sec_majorant}

For classical limit analysis problems, computable majorants of $\zeta^*$ have been derived in \cite{RSH18, HRS19}. The aim of this section is to derive a more general majorant valid for the abstract problem \eqref{dual_problem}. In our analysis, we shall use the assumptions (A1)--(A4) and (D1)--(D4) of Theorem \ref{theorem_duality3}. The following alternative to the assumption (D3) will also be considered:
\begin{itemize}
	\item[(D3$^\prime$)] $P=P_A+P_C=\{x\in X\ |\; x=x_A+x_C,\; x_A\in P_A,\; x_C\in P_C\}$, where $P$, $P_A$ and $P_C$ have the same properties as in (D3).
\end{itemize}
We note that (D3$^\prime$) is more restrictive than (D3); it has been used in \cite{HRS19}.

From the definition of $\zeta^*$ (see \eqref{primal_problem}), we have the following simple upper bound of $\zeta^*$:
\begin{equation}
\zeta^*\leq\frac{\mathcal J(y)}{L(y)}\qquad \forall y\in Y,\;\;y\in\mathrm{dom}\,\mathcal J, \;\;L(y)>0.
\label{upper_bound1}
\end{equation}
Unfortunately, if the set $P$ is unbounded then it is difficult or even impossible to find $y\in\mathrm{dom}\,\mathcal J$ in such a way that the bound \eqref{upper_bound1} would be sufficiently sharp. The aim of this section is to derive an upper bound of $\zeta^*$ for a larger class of functions $y\in Y$, not necessarily belonging to $\mathrm{dom}\,\mathcal J$. 

First, we need to characterize the set $\mathrm{dom}\,\mathcal J$. For this purpose, we define the closed convex cone
\begin{equation}
\mathcal K:=\{y\in Y\ |\;\; a(x,y)\leq0\;\;\forall x\in P_C\},
\label{K}
\end{equation}
and the convex, finite-valued functional
\begin{equation}
\mathcal J_A\colon Y\rightarrow \mathbb R,\quad \mathcal J_A(y):=\max_{x\in P_A}\,a(x,y),\quad y\in Y.
\label{J_A}
\end{equation}

\begin{lemma}
Let the assumptions (A1)--(A4) and (D1)--(D4) be satisfied. Then
\begin{equation}
\mathrm{dom}\,\mathcal J=\mathcal K\quad\mbox{and}\quad \mathcal J(y)\leq \mathcal J_A(y)\quad\forall y\in\mathcal K.
\label{J_bound}
\end{equation}
Moreover, if (D3$\,^\prime$) holds then $\mathcal J(y)= \mathcal J_A(y)$ for any $y\in\mathcal K$.
\label{lem_J}
\end{lemma}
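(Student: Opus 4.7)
The plan is to establish the identity $\mathrm{dom}\,\mathcal J=\mathcal K$ by two containments and then leverage the decomposition (D3) (or (D3$^\prime$)) to compare $\mathcal J$ with $\mathcal J_A$. I would begin by proving $\mathrm{dom}\,\mathcal J\subseteq\mathcal K$ via a positive-homogeneity/contradiction argument: if $y\notin\mathcal K$, then some $x_C\in P_C$ satisfies $a(x_C,y)>0$; since $P_C$ is a cone contained in $P$, the scaling $\alpha x_C\in P$ for every $\alpha>0$ gives $a(\alpha x_C,y)=\alpha a(x_C,y)\to+\infty$, forcing $\mathcal J(y)=+\infty$, a contradiction.

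Next I would prove the reverse inclusion together with the bound $\mathcal J(y)\leq\mathcal J_A(y)$ for $y\in\mathcal K$. Take any $y\in\mathcal K$ and any $x\in P$. By (D3), decompose $x=x_A+x_C$ with $x_A\in P_A,\ x_C\in P_C$; bilinearity of $a$ and the defining inequality of $\mathcal K$ give
\begin{equation*}
a(x,y)=a(x_A,y)+a(x_C,y)\leq a(x_A,y)\leq \mathcal J_A(y),
\end{equation*}
and passing to the supremum over $x\in P$ yields $\mathcal J(y)\leq\mathcal J_A(y)<+\infty$, which both places $y$ in $\mathrm{dom}\,\mathcal J$ and proves the desired inequality. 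Note that $\mathcal J_A$ is well-defined and finite because $P_A$ is closed, bounded and convex while $a(\cdot,y)$ is continuous; combined with reflexivity of $X$ (assumption (D1)), the maximum in the definition of $\mathcal J_A$ is indeed attained.

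For the final claim under (D3$^\prime$), I would fix $y\in\mathcal K$ and compute the sup of $a(x,y)$ directly over the representation $x=x_A+x_C$, $x_A\in P_A$, $x_C\in P_C$. Since every such sum lies in $P$, bilinearity allows the supremum to split into independent parts,
\begin{equation*}
\mathcal J(y)=\sup_{x_A\in P_A}a(x_A,y)+\sup_{x_C\in P_C}a(x_C,y).
\end{equation*}
The first summand equals $\mathcal J_A(y)$, while the second equals $0$: it is nonpositive because $y\in\mathcal K$, and the value $0$ is attained at the cone vertex $x_C=0_X\in P_C$. Combined with the inequality already established, this gives $\mathcal J(y)=\mathcal J_A(y)$.

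The whole argument is essentially bookkeeping built on the cone property of $P_C$; I do not expect a genuine obstacle. The one subtle point is justifying the separation of the sup in the last step, since a priori only one direction of such a splitting is free for a sum over a product set. Here it is legitimate precisely because the Minkowski sum representation (D3$^\prime$) makes the pair $(x_A,x_C)$ range independently over $P_A\times P_C$, which is exactly what distinguishes (D3$^\prime$) from the weaker (D3) and explains why the reverse bound $\mathcal J\geq\mathcal J_A$ may fail in the latter case.
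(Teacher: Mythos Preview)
Your proposal is correct and follows essentially the same route as the paper: the contrapositive scaling argument for $\mathrm{dom}\,\mathcal J\subseteq\mathcal K$, the decomposition bound for the reverse inclusion and the inequality $\mathcal J\leq\mathcal J_A$, and the observation that under (D3$^\prime$) the sup over $P=P_A+P_C$ recovers $\mathcal J_A$. The only cosmetic difference is that for the reverse inequality under (D3$^\prime$) the paper simply notes $P_A=P_A+\{0_X\}\subset P$ and bounds $\mathcal J(y)\geq\sup_{x_A\in P_A}a(x_A,y)$, whereas you obtain equality directly via the splitting of the supremum; both are equivalent.
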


\begin{proof}
Assume that $y\not\in\mathcal K$. Then there exists $x_C\in P_C$ such that $a(x_C,y)>0$. From (D3), it follows that $\alpha x_C\in P$ for any $\alpha\geq0$. Hence,
$$\mathcal  J(y)\geq\lim_{\alpha\rightarrow+\infty}a(\alpha x_C,y)=\lim_{\alpha\rightarrow+\infty}\alpha a(x_C,y)=+\infty.$$
Let $y\in\mathcal K$. Then
\begin{equation}
\mathcal  J(y)\leq \sup_{x_A\in P_A}a(x_A,y)+\sup_{x_C\in P_C}a(x_C,y)=J_A(y)+0=J_A(y)<+\infty.
\label{J<=J_A}
\end{equation}
If (D3$\,^\prime$) holds then $P_A=P_A+\{0_X\}\subset P$. Hence,
\begin{equation}
\mathcal  J(y)\geq \sup_{x\in P_A}a(x,y)=J_A(y).
\label{J>=J_A}
\end{equation}
From \eqref{J<=J_A} and \eqref{J>=J_A}, it follows that $\mathcal  J(y)= J_A(y)$ for any $y\in\mathcal K$.
\end{proof}

From the definition of $\mathcal J_A$ and the boundedness of $a$ and $P_A$, we easily derive the useful estimates
\begin{equation}
|\mathcal J_A(y_1)-\mathcal J_A(y_2)|\leq \mathcal J_A(y_1-y_2),\quad \forall y_1,y_2\in Y,
\label{J_A_est1}
\end{equation}
and
\begin{equation}
\mathcal J_A(y)\leq \varrho_A\|a\|\|y\|_Y,\quad \forall y\in Y,\quad \varrho_A:=\max_{x\in P_A}\|x\|_X.
\label{J_A_est2}
\end{equation}

In order to estimate $\zeta^*$ using $y\not\in \mathcal K$, it is important to measure the distance between $y$ and $\mathcal K$. Define the quantity
\begin{equation}
\|\Pi_C\,y\|_X:=\left(\max_{x\in P_C}\{-\|x\|^2_X+2a(x,y)\}\right)^{1/2},\quad y\in Y.
\label{Pi_C_norm}
\end{equation}
\begin{remark}
	\emph{The notation $\|\Pi_C\,y\|_X$ including the norm in $X$ is justified if $X$ is a Hilbert space. Indeed, define the operator
		\begin{equation}
		\Pi_C\colon Y\rightarrow P_C,\quad \Pi_C\,y:=\mathrm{arg}\max_{x\in P_C}\{-\|x\|^2_X+2a(x,y)\},\quad y\in Y.
		\label{Pi_C}
		\end{equation}
		From the cone property of $P_C$, \eqref{Pi_C} is equivalent to 
		$$\|\Pi_C\,y\|_X^2=a(\Pi_C\,y,y)\quad\mbox{and}\quad (\Pi_C\,y,x)\geq a(x,y)\quad\forall x\in P_C.$$
		Hence, we obtain \eqref{Pi_C_norm}. 
	}
\end{remark}
It is also useful to note that if $y\in\mathcal K$ then $\|\Pi_C\,y\|_X=0$. We have the following result.
\begin{lemma}
	Let the assumptions (A1)--(A4) and (D1)--(D4) be satisfied and $c^*>0$, $\mathcal K$, $\|\Pi_C\,y\|_X$ be defined by \eqref{inf-sup_abstract}, \eqref{K}, and \eqref{Pi_C_norm}, respectively.
	Then
	\begin{equation}
	\min_{z\in\mathcal K}\|y-z\|\leq C_*\|\Pi_C\,y\|_X,\quad \forall y\in Y, \quad C_*:=c_*^{-1}>0.
	\label{distance}
	\end{equation}
\label{lem_distance}
\end{lemma}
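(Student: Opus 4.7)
The plan is to dualize the distance problem
\begin{equation*}
\tfrac{1}{2}d^2 := \min_{z\in\mathcal K}\tfrac{1}{2}\|y-z\|_Y^2
\end{equation*}
via a quadratic Lagrangian, in close analogy with the proof of Lemma \ref{lem_phi01}. Because $P_C$ is a convex cone containing $0_X$, the indicator of $\mathcal K$ admits the representation $I_{\mathcal K}(z)=\sup_{x\in P_C}a(x,z)$ (equal to $0$ on $\mathcal K$ and to $+\infty$ otherwise). Hence
\begin{equation*}
\tfrac{1}{2}d^2=\inf_{z\in Y}\sup_{x\in P_C}\mathcal L(x,z),\qquad \mathcal L(x,z):=\tfrac{1}{2}\|y-z\|_Y^2+a(x,z).
\end{equation*}
Since $\mathcal L$ is linear in $x$ while strictly convex, continuous, and coercive in $z$, and $P_C\subset X$ and $Y$ are closed convex in reflexive Banach spaces, I would invoke \cite[Proposition VI.2.3]{ET74}, exactly as in the proof of Lemma \ref{lem_phi01}, to interchange inf and sup and obtain $\tfrac{1}{2}d^2=\sup_{x\in P_C}\inf_{z\in Y}\mathcal L(x,z)$.

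Next I would compute the inner infimum explicitly using the Hilbert structure of $Y$. For each $x\in P_C$, let $\xi_x\in Y$ denote the Riesz representative of the continuous linear functional $a(x,\cdot)\colon Y\to\mathbb R$, so that $\|\xi_x\|_Y=\sup_{y'\in Y,\,y'\neq 0_Y}a(x,y')/\|y'\|_Y$. Completion of squares yields the minimizer $z_x=y-\xi_x$ and the value $\inf_{z\in Y}\mathcal L(x,z)=a(x,y)-\tfrac{1}{2}\|\xi_x\|_Y^2$, whence
\begin{equation*}
\tfrac{1}{2}d^2=\sup_{x\in P_C}\Bigl\{a(x,y)-\tfrac{1}{2}\|\xi_x\|_Y^2\Bigr\}.
\end{equation*}
The inf-sup condition \eqref{inf-sup_abstract} is equivalent to $\|\xi_x\|_Y\geq c_*\|x\|_X$ for every $x\in P_C$, so bounding $\|\xi_x\|_Y^2$ below by $c_*^2\|x\|_X^2$ produces the estimate $\tfrac{1}{2}d^2\leq\sup_{x\in P_C}\{a(x,y)-\tfrac{c_*^2}{2}\|x\|_X^2\}$.

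To conclude I would exploit the cone property of $P_C$ by optimising along rays. For any $x\in P_C$ with $x\neq 0_X$ and any $\gamma>0$, the scalar map $\alpha\mapsto 2\alpha\,a(x,y)-\alpha^{2}\gamma\|x\|_X^{2}$ attains its maximum over $\alpha\geq 0$ at $\alpha^{\star}=\max\{a(x,y),0\}/(\gamma\|x\|_X^{2})$, with value $(\max\{a(x,y),0\})^{2}/(\gamma\|x\|_X^{2})$. Applied with $\gamma=1$ to the definition \eqref{Pi_C_norm}, this identity gives
\begin{equation*}
\|\Pi_C\,y\|_X^{2}=\sup_{x\in P_C,\,x\neq 0_X}\frac{(\max\{a(x,y),0\})^{2}}{\|x\|_X^{2}},
\end{equation*}
while $\gamma=c_*^{2}$ identifies the preceding upper bound on $d^{2}$ with $C_*^{2}\|\Pi_C\,y\|_X^{2}$. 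This yields $d\leq C_*\|\Pi_C\,y\|_X$, i.e.\ \eqref{distance}.

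The main technical hurdle is the minimax swap: since $P_C$ is unbounded, this relies on the quadratic coercivity of $\mathcal L$ in $z\in Y$, which is available here for exactly the same reason as in Section \ref{sec_proof}. The remaining arguments are routine Hilbert-space computations and a homogeneity argument based on the conic structure of $P_C$.
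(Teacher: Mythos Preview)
Your proposal is correct and follows essentially the same route as the paper's proof: dualize the squared distance via the Lagrangian $\mathcal L(x,z)$, invoke \cite[Proposition VI.2.3]{ET74} to interchange inf and sup, compute the inner minimum over $z\in Y$ using the Hilbert structure, bound the resulting dual norm below via the inf-sup constant $c_*$, and finally exploit the cone property of $P_C$ to identify the bound with $C_*^2\|\Pi_C\,y\|_X^2$. The only cosmetic differences are your use of the Riesz representative $\xi_x$ (the paper works with the equivalent variational identity $(z_x,z)_Y=-a(x,z)$) and your ray-optimization argument in the last step, where the paper instead performs the direct substitution $x\mapsto x/c_*^2$ to reach the same conclusion.
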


\begin{proof}
Using \eqref{K}, \cite[Proposition VI 2.3]{ET74} and the substitution $z\mapsto z+y$, we consequently derive
\begin{align}
\min_{z\in\mathcal K}\|y-z\|^2&=\min_{z\in Y}\,\sup_{x\in P_C}\left\{\|y-z\|^2+2a(x,z)\right\}\nonumber\\
& =\sup_{x\in P_C}\,\min_{z\in Y}\left\{\|y-z\|^2+2a(x,z)\right\}\nonumber\\
& =\sup_{x\in P_C}\,\min_{z\in Y}\left\{\|z\|^2+2a(x,z)+2a(x,y)\right\} \quad\forall y\in Y.
\label{dist1}
\end{align}
For any $x\in X$, there exists a unique $z_x\in Y$ such that
$$(z_x,z)_Y=-a(x,z)\quad \forall z\in Y.$$
Hence,
\begin{equation}
\|z_x\|_X=\sup_{\substack{z\in Y\\ z\neq 0_Y}}\frac{a(x,z)}{\|z\|_Y}\quad\mbox{and}\quad \min_{z\in Y}\left\{\|z\|^2+2a(x,z)\right\}=-\|z_x\|^2.
\label{z_x}
\end{equation}
Inserting \eqref{z_x} into \eqref{dist1}, we find that
\begin{align}
\min_{z\in\mathcal K}\|y-z\|^2&=\sup_{x\in P_C}\,\min_{z\in Y}\left\{\|z\|^2+2a(x,z)+2a(x,y)\right\}\nonumber\\
&=\sup_{x\in P_C}\,\left\{-\left(\sup_{\substack{z\in Y\\ z\neq 0_Y}}\frac{a(x,z)}{\|z\|_Y}\right)^2+2a(x,y)\right\}\nonumber\\
&\stackrel{\eqref{inf-sup_abstract}}{\leq}\sup_{x\in P_C}\,\left\{-c_*^2\|x\|_X^2+2a(x,y)\right\}\nonumber\\
&=\max_{x\in P_C}\,\left\{-c_*^2\|x/c_*^2\|_X^2+2a(x/c_*^2,y)\right\}\nonumber\\
&=\frac{1}{c_*^2}\max_{x\in P_C}\,\left\{-\|x\|_X^2+2a(x,y)\right\}\stackrel{\eqref{Pi_C}}{=}C_*^2\|\Pi_C\,y\|_X^2 \qquad\forall y\in Y,
\label{dist2}
\end{align}
which gives the desired result.
\end{proof}

Using Lemma \ref{lem_J} and \ref{lem_distance}, we derive the following upper bound of $\zeta^*$.
\begin{theorem}
Let the assumptions (A1)--(A4) and (D1)--(D4) be satisfied and $y\in Y$ be such that 
\begin{equation}
L(y)>C_*\|\Pi_C\,y\|_X\|L\|_{Y^*}.
\label{L_assump}
\end{equation}
Then
\begin{equation}
\zeta^*\leq\frac{\mathcal J_A(y)+\varrho_AC_*\|a\|\|\Pi_C\,y\|_X}{L(y)-C_*\|\Pi_C\,y\|_X\|L\|_{Y^*}}.
\label{upper_bound2}
\end{equation}
\end{theorem}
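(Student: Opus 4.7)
The plan is to combine the simple bound \eqref{upper_bound1} with the distance estimate in Lemma \ref{lem_distance} and the Lipschitz-type control on $\mathcal{J}_A$ from \eqref{J_A_est1}--\eqref{J_A_est2}. The given $y$ is not required to lie in $\mathrm{dom}\,\mathcal{J}=\mathcal{K}$, so the first step is to project it onto $\mathcal{K}$ and carefully track the errors introduced in both the numerator and denominator of the quotient $\mathcal{J}(z)/L(z)$ when $z$ is used in place of $y$.

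More precisely, I would begin by invoking Lemma \ref{lem_distance} to obtain $z\in\mathcal{K}$ with
$$\|y-z\|_Y \leq C_*\|\Pi_C\,y\|_X,$$
using that the minimum is attained because $\mathcal{K}$ is closed and convex in the Hilbert space $Y$. Next I would estimate the denominator: since $L\in Y^*$,
$$L(z) \geq L(y) - \|L\|_{Y^*}\|y-z\|_Y \geq L(y) - C_*\|\Pi_C\,y\|_X\|L\|_{Y^*},$$
and by hypothesis \eqref{L_assump} this is strictly positive, so $z$ is admissible in \eqref{upper_bound1}.

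For the numerator, Lemma \ref{lem_J} gives $\mathcal{J}(z)\leq \mathcal{J}_A(z)$, and then \eqref{J_A_est1} combined with \eqref{J_A_est2} yields
$$\mathcal{J}_A(z) \leq \mathcal{J}_A(y) + \mathcal{J}_A(z-y) \leq \mathcal{J}_A(y) + \varrho_A\|a\|\,\|z-y\|_Y \leq \mathcal{J}_A(y) + \varrho_A C_*\|a\|\,\|\Pi_C\,y\|_X.$$
Inserting both estimates into \eqref{upper_bound1} applied at $z$ delivers exactly \eqref{upper_bound2}.

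I do not anticipate serious technical obstacles: the proof is essentially a perturbation argument around the elementary bound \eqref{upper_bound1}. The only point requiring mild care is that one must verify the projection $z$ onto $\mathcal{K}$ is both close enough to preserve positivity of $L(z)$ (handled by the quantitative hypothesis \eqref{L_assump}) and compatible with the sublinear growth control on $\mathcal{J}_A$ (handled by the homogeneity-based inequality \eqref{J_A_est1}). Everything else is bookkeeping.
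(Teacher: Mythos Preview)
Your proposal is correct and follows essentially the same approach as the paper: both project $y$ onto $\mathcal{K}$ via Lemma~\ref{lem_distance}, bound $\mathcal{J}(z)$ by $\mathcal{J}_A(z)$ using Lemma~\ref{lem_J}, and control the perturbations in numerator and denominator with \eqref{J_A_est1}--\eqref{J_A_est2} and the continuity of $L$. The only cosmetic difference is that the paper packages the final step as showing $\mathcal{J}(z_y)-\lambda L(z_y)<0$ for every $\lambda$ exceeding the right-hand side of \eqref{upper_bound2}, whereas you bound the quotient $\mathcal{J}(z)/L(z)$ directly; the two are equivalent and your version is arguably more transparent.
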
	

\begin{proof}
Let $y\in Y$ satisfy \eqref{L_assump}. By Lemma \ref{lem_distance} there exists $z_y\in \mathcal K$ such that
\begin{equation}
\|y-z_y\|_Y\leq C_*\|\Pi_C\,y\|_X.
\label{distance2}
\end{equation}
For any $\lambda>\frac{\mathcal J_A(y)+\varrho_AC_*\|a\|\|\Pi_C\,y\|_X}{L(y)-C_*\|\Pi_C\,y\|_X\|L\|_{Y^*}}$, we have
\begin{align*}
\mathcal J(z_y)-\lambda L(z_y) & \stackrel{\eqref{J_bound}}{\leq}\mathcal J_A(z_y)-\lambda L(z_y)\nonumber\\
& =\mathcal J_A(y)-\lambda L(y)+[\mathcal J_A(z_y)-\mathcal J_A(y)]+\lambda L(y-z_y)\nonumber\\
& \stackrel{\eqref{J_A_est1},\eqref{J_A_est2}}{\leq}\mathcal J_A(y)-\lambda L(y)+(\varrho_A\|a\|+\lambda\|L\|_{Y^*})\|y-z_y\|_Y\nonumber\\
& \stackrel{\eqref{distance2}}{\leq}\mathcal J_A(y)-\lambda L(y)+C_*(\varrho_A\|a\|+\lambda\|L\|_{Y^*})\|\Pi_C\,y\|_X\nonumber\\
&=\mathcal J_A(y)+\varrho_AC_*\|a\|\|\Pi_C\,y\|_X-\lambda\left[L(y)-C_*\|L\|_{Y^*}\|\Pi_C\,y\|_X\right]<0.
\end{align*}
Hence, $L(z_y)>\mathcal J(z_y)/\lambda\geq 0$ and
$$\zeta^*\stackrel{\eqref{upper_bound1}}{\leq}\frac{\mathcal J(z_y)}{L(z_y)}<\lambda\qquad \forall\lambda>\frac{\mathcal J_A(y)+\varrho_AC_*\|a\|\|\Pi_C\,y\|_X}{L(y)-C_*\|\Pi_C\,y\|_X\|L\|_{Y^*}}.$$
This implies \eqref{upper_bound2}.
\end{proof}

\begin{remark}
\emph{If the assumption (D3$^\prime$) holds and $y\in\mathcal K$ then $\mathcal J_A(y)=\mathcal J(y)$, $\|\Pi_C\,y\|_X=0$, and thus the bounds \eqref{upper_bound1} and \eqref{upper_bound2} coincide.}
\end{remark}

\begin{remark}
	\emph{If $y\in Y$ is sufficiently close to the cone $\mathcal K$ then the assumption \eqref{L_assump} is satisfied. This can be achieved by a convenient numerical method, e.g., by the regularization method presented in the previous section.}
\end{remark}

\begin{remark}
	\emph{The bound \eqref{upper_bound2} is computable if estimates of $\|L\|_{Y^*}$, $\|\Pi_C\,y\|_X$ and $C_*$ are at our disposal. The computable bounds of $\|L\|_{Y^*}$, $\|\Pi_C\,y\|_X$ are available in the literature on a posteriori error analysis. Computable bounds of the inf-sup constant $C_*$ have appeared in the literature quite recently, see \cite{HRS19} and references therein.}
\end{remark}

\begin{remark}
	\emph{In \cite{RSH18}, a computable majorant of the limit load was used in the Hencky plasticity problem to prove convergence of the standard finite element method and to detect locking effects that may arise when the simplest P1 elements are used.}
\end{remark}


\section{Examples}
\label{sec_examples}

In this section, we illustrate the abstract problem \eqref{duality_problem} on particular examples from nonlinear mechanics and discuss the validity of the assumptions (A1)--(A4), (B) and (D1)--(D4) presented in Section \ref{sec_analysis}. In all examples we consider a bounded domain $\Omega\subset\mathbb R^d$, $d=2,3$, with Lipschitz continuous boundary $\partial\Omega$. The outward unit normal to $\partial\Omega$ is denoted by $\nu$. The abstract spaces $X$ and $Y$ will be represented by $L^2$ and $H^1$ spaces, respectively, for the sake of simplicity.

\subsection{Limit analysis in classical perfect plasticity}
\label{subsec_ex1}

Details of the mathematical theory of limit analysis in classical perfect plasticity may be found in \cite{T85} or \cite{Ch96}. For its engineering applications we refer, for example, to \cite{CL90, Sl13}. The aim is to find the largest load factor at which plastic behaviour may be sustained, in the context of proportional loading. We briefly recapitulate results presented in \cite{HRS19, RSH18, HRS16, HRS15}. 

A body occupying the domain $\Omega$ is fixed on a part $\Gamma_0\subset \partial\Omega$ and surface forces $f\colon\Gamma_f\rightarrow \mathbb R^d$ act on the remaining part $\Gamma_f$ of $\partial\Omega$. We assume that $\Gamma_0$ and $\Gamma_f$ have a positive surface measure. Let $F\colon\Omega\rightarrow \mathbb R^d$ denote the volume force. The external loads are parametrized by a scalar factor $\lambda\geq0$. 
	
Next, we denote the space of symmetric matrices (second order tensors) by $\mathbb R^{d\times d}_{sym}$. The Cauchy stress field $\sigma\colon\Omega\rightarrow\mathbb R^{d\times d}_{sym}$ satisfies the equilibrium equation and traction boundary condition
\begin{subequations}
	\begin{align}
	\mathrm{div}\,\sigma+\lambda F=0 &\quad \mbox{in}\;\Omega, \label{balance1a}\\
	\sigma\nu=\lambda f & \quad \mbox{on}\;\Gamma_f, \label{balance1b}
	\end{align}
\end{subequations}
and is plastically admissible in the sense that 
\begin{equation}
\sigma\in B\;\;\mbox{in}\;\Omega,\quad B:=\{\tau\in\mathbb R^{d\times d}_{sym}\ |\;\; \varphi(\tau)\leq 0\}.
\label{B}
\end{equation}
Here, $\varphi\colon \mathbb R^{d\times d}_{sym}\rightarrow \mathbb R$, $\varphi(0)<0$, is a convex function representing a yield criterion. For the sake of simplicity, we assume that $\varphi$ and thus $B$ are independent of the spatial variable.

The infinitesimal strain rate $\varepsilon\colon\Omega\rightarrow\mathbb R^{d\times d}_{sym}$ and the displacement rate $v\colon\Omega\rightarrow\mathbb R^d$ satisfy the relations
\begin{equation}
\varepsilon:=\varepsilon(v)=\frac{1}{2}[\nabla v+(\nabla v)^\top] \;\;\mbox{in}\;\Omega,\quad v=0\;\;\mbox{on}\;\Gamma_0.
\label{u}
\end{equation}
The last ingredient of the perfectly plastic model is a plastic flow rule that relates $\sigma$ and $\varepsilon$, and which is based on the set $B$. This relation is represented by the principle of maximum plastic dissipation in quasistatic models or by a generalized projection of $\mathbb R^{d\times d}_{sym}$ onto $B$ in total strain models. We skip its definition, for the sake of brevity.

Formally, the limit load factor $\lambda^*$ is defined as the supremum over $\lambda\geq 0$ subject to \eqref{balance1a}, \eqref{balance1b} and \eqref{B}. To define $\lambda^*$ more precisely and in the form \eqref{primal_problem}, it is necessary to introduce a convenient function space $X$ for stress fields. For this purpose define the Hilbert space
$$X:=L^2(\Omega;\mathbb R^{d\times d}_{sym})=\{\sigma\colon \Omega\rightarrow\mathbb R^{d\times d}_{sym}\ |\; \sigma_{ij}\in L^2(\Omega),\;\; i,j=1,2,\ldots d\}$$
equipped with the scalar product and norm
$$(\sigma,\varepsilon)_X:=(\sigma,\varepsilon)_2=\int_\Omega\sigma:\varepsilon\,dx,\quad \|\sigma\|_X:=\|\sigma\|_2=\sqrt{(\sigma,\sigma)_2},$$
where $\sigma:\varepsilon=\sigma_{ij}\varepsilon_{ij}$ with the summation convention on repeated indices.
The corresponding primal space $Y$ is chosen as follows:
$$Y:=\{v\in W^{1,2}(\Omega;\mathbb R^d)\ |\;\; v=0\;\mbox{a.e. in } \Gamma_0\}.$$
It is also a Hilbert space representing rates of displacements with the following scalar product and norm:
$$(u,v)_Y:=(\nabla u, \nabla v)_2,\quad \|v\|_Y:=\|\nabla v\|_2.$$ 

Using the spaces $X,Y$ and Green's theorem, a weak formulation of \eqref{balance1a} and \eqref{balance1b} for fixed $\sigma$ reads as follows:
\begin{equation}
a(\sigma,v)=\lambda L(v)\quad\forall v\in Y,
\label{balance2}
\end{equation}
where 
\begin{equation}
a(\sigma,v):=\int_\Omega\,\sigma:\varepsilon(v)\,dx,\quad L(v):=\int_\Omega F\cdot v\,dx+\int_{\Gamma_f}f\cdot v\,ds,\quad v\in Y,
\label{balance3}
\end{equation}
with $\sigma\in X$, $F\in L^2(\Omega;\mathbb R^d)$ and $f\in L^2(\Gamma_f;\mathbb R^d)$.
It is easy to see that $a$ is a continuous bilinear form in $X\times Y$ and $L\in Y^*$. Using the notation from Section \ref{sec_intro}, one can write
$$\lambda^*=\sup\{\lambda\in\mathbb R_+\ |\;\; P\cap \Lambda_\lambda\neq\emptyset\}=\sup_{\sigma\in P}\inf_{\substack{v\in Y\\ L(v)=1}}\ a(\sigma,v),$$
where 
\begin{equation}
P:=\{\sigma\in X\ |\;\; \sigma\in B\;\;\mbox{a.e. in } \Omega\},\quad \Lambda_\lambda:=\{\sigma\in X\ |\;\; a(\sigma,v)=\lambda L(v)\;\;\forall v\in Y\}.
\label{P}
\end{equation}
The sets $P$ and $\Lambda_\lambda$ are closed, convex and non-empty in $X$ and represent plastically and statically admissible stresses, respectively.

We note that the set $P$ is defined in a pointwise sense. Consequently, the sets $P_A$, $P_C$ and the functions $\mathcal J$, $\mathcal J_\alpha$, $\Pi_\alpha$ and $\Pi_C$ introduced in the previous sections may be also defined in a pointwise sense.
To illustrate, we choose the von Mises yield criterion defined by 
\begin{equation}
\varphi(\sigma):=|\sigma^D|-\gamma,\quad \gamma>0,\; \sigma^D=\sigma-\frac{1}{d}(\mathrm{tr}\,\sigma)I, \; |\sigma|:=\sqrt{\sigma_{ij}\sigma_{ij}},
\label{vM}
\end{equation}
where $I$ is the unit $d\times d$ matrix, $\mathrm{tr}\,\sigma$ denotes the trace of $\sigma$, $\sigma^D$ is the deviatoric part of $\sigma$ and $\gamma>0$ is a given parameter representing the initial yield stress.
From \cite{T85, Ch96, HRS19}, it is known that $P$ can be decomposed according to $P=P_A+P_C$, where
$$P_A=\{\tau\in X\ |\;\; |\tau|\leq \gamma\;\;\mbox{a.e. in } \Omega\},\quad P_C=\{\tau\in X\ |\;\; \exists q\in L^2(\Omega):\;\; \tau=qI\}.$$
Clearly, $P_A$ is bounded in $X$ and $P_C$ is a closed subspace of $X$, that is, a convex cone. To prove \eqref{inf-sup_abstract}, we use the well-known inf-sup condition for incompressible flow media with $c_\Omega>0$:
\begin{equation}
\inf_{\substack{x_C\in P_{C}\\ x_C\neq0_X}}\ \sup_{\substack{y\in Y\\ y\neq0_Y}}\ \frac{a(x_C,y)}{\|x_C\|_X\|y\|_Y} =\inf_{\substack{\tau\in P_{C}\\ \tau\neq0_X}}\ \sup_{\substack{v\in Y\\ v\neq0_Y}}\ \frac{\int_\Omega\,\tau:\varepsilon(v)\,dx}{\|\tau\|_2\|\nabla v\|_2}=\frac{1}{\sqrt{d}}\inf_{\substack{q\in L^2(\Omega)\\ q\neq0}}\ \sup_{\substack{v\in Y\\ v\neq0_Y}}\ \frac{\int_\Omega q\,\mathrm{div}\,v\,dx}{\|q\|_2\|\nabla v\|_2}\geq \frac{c_\Omega}{\sqrt{d}}.
\label{inf-sup_vM}
\end{equation}
Thus, the condition \eqref{inf-sup_abstract} holds with $c_*=c_\Omega/\sqrt{d}$. Consequently, the assumptions (A1)--(A4), (D1)--(D4) from Section \ref{sec_analysis} are satisfied and from Theorem \ref{theorem_duality3} it follows that
$$\lambda^*=\zeta^*=\inf_{\substack{v\in Y\\ L(v)=1}}\sup_{\sigma\in P}\ a(\sigma,v)=\inf_{\substack{v\in Y\\ L(v)=1}}\mathcal J(v).$$
Notice that if $\Gamma_0=\partial\Omega$ then it is necessary to use Theorem \ref{theorem_duality6} with the weaker assumption \eqref{inf-sup_abstract4} instead of (D4). In this case, we replace the space $L^2(\Omega)$ in \eqref{inf-sup_vM} by $L^2_0(\Omega)=\{q\in L^2(\Omega)\ |\; \int_\Omega q\,dx=0\}$, see \cite{RSH18,HRS19}.

The primal functional $\mathcal J$ for the von Mises yield criterion is given by
$$\mathcal J(v)=\sup_{\sigma\in P}\ a(\sigma,v)=\left\{
\begin{array}{ll}
\displaystyle \int_\Omega \gamma|\varepsilon(v)|\,dx,& \mathrm{div}\,v=0\;\mbox{in } \Omega,\\
+\infty, & \mbox{otherwise},
\end{array}
\right.\qquad\forall v\in Y.$$
This functional may have no minimizers in $Y$. To guarantee that the primal problem is solvable, it is necessary to use another choice of $X$ and $Y$, as was done, for example, in \cite{Ch80,T85,Ch96}. In particular, the assumptions (C1)--(C3) of Theorem \ref{theorem_duality2} were verified in \cite{Ch80,Ch96}.

The functions $\mathcal J_\alpha$, $\mathcal J_A$ and $\Pi_C$ for the von Mises yield criterion can be found in the following forms:
\begin{equation*}
\mathcal J_\alpha(v):=\int_\Omega j_\alpha(\varepsilon(v))\,dx,\quad j_\alpha(\varepsilon)=\left\{
\begin{array}{cl}
\frac{1}{2}\alpha|\varepsilon|^2,& \alpha|\varepsilon^D|\leq\gamma\\[2pt]
\frac{1}{2d}\alpha(\mathrm{tr}\,\varepsilon)^2+\gamma|e^D|-\frac{\gamma^2}{2\alpha},& \alpha|e^D|\geq\gamma,
\end{array}
\right.,
\label{J_alpha_VM}
\end{equation*}
$$\mathcal J_A(v)=\int_\Omega \gamma|\varepsilon(v)|\,dx,\quad \|\Pi_C\,v\|_2=d^{-1/2}\|\mathrm{div}\,v\|_2\quad\forall v\in Y.$$
Let us recall that they are important for the regularization method and the computable majorant presented in the previous sections. We refer to \cite{HRS15, HRS16, RSH18, HRS19} for more details.

\begin{remark}
\emph{If we choose the Drucker-Prager or Mohr-Coulomb yield criteria in \eqref{B} instead of von Mises then it is also possible to find an appropriate split $P=P_A+P_C$ such that the assumptions (D3) and even (D3$^\prime$) are satisfied. But for these criteria the cone $P_C$ is not a subspace of $X$. Therefore, it is necessary to work with the inf-sup condition on convex cones, see \cite{HRS19}.}
\end{remark}

\subsection{Plastically admissible stresses in strain-gradient plasticity}
\label{subsec_ex2}

In the next two subsections, we consider as further examples the models of strain-gradient plasticity presented in \cite{Reddy_etal2008, Reddy2011a, CEMRS17, RS20}. First, following \cite{RS20}, we introduce a subproblem that enables us to decide whether a given stress tensor is plastically admissible or not. We note that this problem is simple in classical plasticity where the yield criterion can be verified pointwisely (see, for example the definition of $P$ in \eqref{P}). However, plastic yield criteria in strain-gradient plasticity are non-local and the verification is strongly non-trivial.
	
Beside the space $\mathbb R^{d\times d}_{sym}$ defined in Section \ref{subsec_ex1}, we also use the following spaces of the second and third order tensors, respectively:
$$\mathbb R^{d\times d}_{sym,0}:=\{\pi\in\mathbb R^{d\times d}_{sym}\ |\; \mathrm{tr}\,\pi=0\},$$
$$\mathbb R^{d\times d\times d}_{sym,0}:=\{\Pi\in\mathbb R^{d\times d\times d}\ |\; \Pi_{ijk} = \Pi_{jik}, \;i,j,k=1,2,\ldots,d, \; \Pi_{ppk} = 0,\; k=1,2,\ldots,d\}.$$
Thus, the third order tensor $\Pi$ belongs to $\mathbb R^{d\times d\times d}_{sym,0}$ if it is symmetric and deviatoric with respect to the first two indices.

We assume that $\sigma\colon\Omega\rightarrow\mathbb R^{d\times d}_{sym}$ is a given stress field and $\sigma^D\colon\Omega\rightarrow\mathbb R^{d\times d}_{sym,0}$ denotes its deviatoric part. The theory of strain gradient plasticity makes use of second- and third-order tensors $\pi\colon\Omega\rightarrow\mathbb R^{d\times d}_{sym,0}$ and $\Pi\colon\Omega\rightarrow\mathbb R^{d\times d\times d}_{sym,0}$ that represent microstresses. We say that $\sigma$ is {\it plastically admissible} if there exists a pair $(\pi,\Pi)$ such that
\begin{equation}
\sigma^D = \pi - \mbox{div}\,\Pi\quad\mbox{in }\Omega,\quad \Pi\nu= 0\;\;\mbox{on } \Gamma_F,
\label{microforce_balance}
\end{equation}
\begin{equation}
\varphi_\ell(\pi,\Pi):=\sqrt{|\pi|^2+\ell^{-2}|\Pi|^2}- \gamma\leq0\quad\mbox{in }\Omega,
\label{yield}
\end{equation}
where $\gamma>0$ is the yield stress, $\ell>0$ is the length parameter, $|\Pi|^2=\Pi\circ\Pi:=\Pi_{ijk}\Pi_{ijk}$ and $\Gamma_F\subset\partial\Omega$. The part of $\partial\Omega$ complementary to $\Gamma_F$ in $\partial\Omega$ is denoted by $\Gamma_H$. 

We note that the yield criterion \eqref{yield} can be viewed as an extension of the classical condition \eqref{vM}. Indeed, setting $\Pi=0$ we derive the sufficient condition $|\sigma^D|\leq \gamma$ for $\sigma$ to be plastically admissible. Unlike the classical case, the stress $\sigma$ can be plastically admissible even if $|\sigma^D|> \gamma$. 

If $\sigma$ is plastically admissible then $\lambda\sigma$ is also plastically admissible for any $\lambda\in[0,1]$. This parametrization motivates us to introduce the following problem: \textit{find the maximal value $\lambda^*$ of $\lambda\geq0$ for which $\lambda\sigma$ is plastically admissible in the sense of \eqref{microforce_balance} and \eqref{yield}}. Clearly, if $\lambda^*>1$ then $\sigma$ is admissible.

Let us define $\lambda^*$ more precisely, using the abstract problem \eqref{primal_problem}. We assume that all components of $\sigma$, $\pi$ and $\Pi$ belong to $L^2(\Omega)$, that is, $\sigma\in L^2(\Omega;\mathbb R^{d\times d}_{sym})$, $\pi\in L^2(\Omega;\mathbb R^{d\times d}_{sym,0})$ and $\Pi\in L^2(\Omega;\mathbb R^{d\times d\times d}_{sym,0})$. The space $X$ is defined as the space of pairs $(\pi,\Pi)$ endowed with the scalar product
$$((\pi,\Pi),(\bar\pi,\bar\Pi))_X:=\int_\Omega(\pi:\bar\pi+\Pi\circ\bar\Pi)\, dx.$$
The primal space 
$$Y:=\{q\in L^2(\Omega;\mathbb R^{d\times d}_{sym,0})\ |\;\; \nabla q\in L^2(\Omega;\mathbb R^{d\times d\times d}_{sym,0}),\;  q= 0 \;\mbox{on } \Gamma_H\}$$
is the Hilbert space of admissible plastic strain rates with the scalar product
$$(q,\bar q)_Y:=\int_\Omega (q:\bar q+\nabla q\circ\nabla \bar q)\, dx.$$
Using the spaces $X$ and $Y$, we introduce the following weak form of \eqref{microforce_balance}:
\begin{equation}
\int_\Omega [\pi:q+\Pi\circ\nabla q]\,dx=\int_\Omega \sigma^D:q\,dx\quad\forall\ q\in Y,
\label{weak_balance}
\end{equation}
and define the forms $a\colon X\times Y$ and $L\in Y^*$ by
$$a((\pi,\Pi),q):=\int_\Omega [\pi:q+\Pi\circ\nabla q]\,dx,\quad L(q):=\int_\Omega \sigma^D:q\,dx.$$
Then the dual problem \eqref{primal_problem} reads
$$\lambda^*=\sup\{\lambda\in\mathbb R_+\ |\;\; P\cap \Lambda_\lambda\neq\emptyset\}=\sup_{(\pi,\Pi)\in P}\inf_{\substack{q\in Y\\ L(q)=1}}\ a((\pi,\Pi),q),$$
where 
$$P:=\{(\pi,\Pi)\in X\ |\;\; \sqrt{|\pi|^2+\ell^{-2}|\Pi|^2}\leq \gamma\;\;\mbox{a.e. in } \Omega\},$$
$$\Lambda_\lambda:=\{(\pi,\Pi)\in X\ |\;\; a((\pi,\Pi),q)=\lambda L(q)\;\;\forall q\in Y\}.$$
From \eqref{yield}, it follows that $P$ is bounded in $X$, i.e. the assumption (B) of Theorem \ref{theorem_duality1} is satisfied. Thus we have
$$\lambda^*=\zeta^*=\inf_{\substack{q\in Y\\ L(q)=1}}\sup_{(\pi,\Pi)\in P}\ a((\pi,\Pi),q)=\inf_{\substack{q\in Y\\ L(q)=1}}\mathcal J(q).$$
In this case, the functional $\mathcal J$ can be found in the form
$$\mathcal J(q)=\int_\Omega \gamma\sqrt{|q|^2+\ell^2|\nabla q|^2}\,dx \quad\forall q\in Y.$$
Although $\mathcal J$ is finite-valued everywhere, it is not coercive in $Y$. Therefore, a certain relaxation of the problem is necessary if we wish to properly define a minimizer of $\mathcal J$ and guarantee its existence. Such an analysis has not been done for this problem and we leave this as a topic for further investigation.

The primal and dual problems have been solved by regularization (penalization) methods in \cite{RS20}. In particular, the regularized functional $\mathcal J_\alpha$ defined by \eqref{J_alpha} takes the form
$$\mathcal J_\alpha(q):=\int_\Omega D_\alpha(q,\nabla q)\,dx,\quad D_\alpha(q,\nabla q)=\left\{
\begin{array}{cl}
\frac{\alpha}{2}(|q|^2+\ell^2|\nabla  q|^2), & \sqrt{|q|^2+\ell^2|\nabla q|^2}\leq\frac{1}{\alpha}\\[2mm]
\sqrt{ |q|^2+\ell^2|\nabla q|^2}-\frac{1}{2\alpha}, &\sqrt{|q|^2+\ell^2|\nabla q|^2}\geq\frac{1}{\alpha}.
\end{array}
\right.$$
Reliable lower and upper bounds of $\lambda^*$ have also been estimated in \cite{RS20} using the regularization methods.

\begin{remark}
\emph{Other choices of yield functions are possible in \eqref{yield}. For example, the following more general function has been considered in \cite{RS20, Reddy2011a}:
\begin{equation}
\varphi_{\ell,r} (\pi,\Pi) : =\left\{
\begin{array}{cc}
\left[|\pi|^{r}+(\ell^{-1}|\Pi|)^{r}\right]^{1/r} - \gamma,& 1\leq r<+\infty,\\
\max\{ |\pi|,\ \ell^{-1}|\Pi| \} - \gamma,& r=+\infty.
\end{array}
\right. 
\label{f_ell_r}
\end{equation}
The set $P$ corresponding to this function remains bounded and thus the equality $\lambda^*=\zeta^*$ holds. Denoting $r'=(1-1/r)^{-1}$ we find the functional $\mathcal J$ in the following form:
\begin{equation}
\mathcal J(q)=
\left\{
\begin{array}{cc}
\int_\Omega \gamma[|q|^{r'}+\ell^2|\nabla q|^{r'}]^{1/r'}\,dx, & 1\leq r'<+\infty,\\[2mm]
\int_\Omega \gamma\max\{ |q|,\ \ell|\nabla q| \}\,dx,& r'=+\infty.
\end{array}
\right.
\label{J_r}
\end{equation}
}
\end{remark}

\subsection{Limit (load) analysis in strain-gradient plasticity}
\label{subsec_ex3}

Limit analysis in gradient-enhanced plasticity has been studied in \cite{Fleck-Willis2009, Polizzotto2010} for a model in which size-dependence is through the gradient of a scalar function of the plastic strain. Here, we consider the model from \cite{Reddy_etal2008, Reddy2011a, CEMRS17, RS20} where the gradient is applied to the entire plastic strain.

We use the same tensors $\sigma$, $\pi$, $\Pi$ and external forces $F$ and $f$ as in Sections \ref{subsec_ex1} and \ref{subsec_ex2}. Let us note that the pair of boundaries $(\Gamma_F,\Gamma_H)$ defined in Section \ref{subsec_ex2} may differ from $(\Gamma_0,\Gamma_f)$ introduced in Section \ref{subsec_ex1}. The limit analysis problem for the strain gradient plasticity reads: \textit{find the supremum $\lambda^*$ over all $\lambda\geq0$ for which there exist $\sigma$, $\pi$, $\Pi$ such that}
\begin{equation}
\mathrm{div}\,\sigma+\lambda F=0 \;\;\mbox{in}\;\Omega,\quad \sigma\nu=\lambda f\;\;\mbox{on}\;\Gamma_f,
\label{balance5}
\end{equation}
\begin{equation}
\sigma^D = \pi - \mbox{div}\,\Pi\quad\mbox{in }\Omega,\quad \Pi\nu= 0\;\;\mbox{on } \Gamma_F,
\label{microforce_balance2}
\end{equation}
\begin{equation}
\varphi_\ell(\pi,\Pi)=\sqrt{|\pi|^2+\ell^{-2}|\Pi|^2}\leq \gamma\quad\mbox{in }\Omega,\quad \gamma,\ell>0.
\label{yield2}
\end{equation}
We see that \eqref{balance5} coincides with \eqref{balance1a} and \eqref{balance1b} from Section \ref{subsec_ex1}. However, we now use the definition of plastically admissible stresses from Section \ref{subsec_ex2} (see \eqref{microforce_balance2} and \eqref{yield2}) instead of \eqref{yield}.

To rewrite this problem in the form \eqref{primal_problem} or \eqref{dual_problem}, we split $\sigma$ as follows:
\begin{equation}
\sigma=pI+\sigma^D=pI+\pi-\mathrm{div}\,\Pi\quad\mbox{in } \Omega.
\label{sigma_split}
\end{equation}
We denote by $X$ the $L^2$-space of all admissible triples $(p,\pi,\Pi)$. The equations \eqref{balance5} and \eqref{microforce_balance2} can be rewritten using \eqref{sigma_split} to the following weak form:
$$a((p,\pi,\Pi),v)=\lambda L(v)\quad\forall v\in Y,$$
where
$$a((p,\pi,\Pi),v):=\int_\Omega [\,p\,\mathrm{div}\,v+\pi:\varepsilon(v)+\Pi\circ\nabla\varepsilon(v)]\,dx,$$
$$L(v):=\int_\Omega F\cdot v\,dx+\int_{\Gamma_f} f\cdot v\,ds,$$
and
$$Y:=\{v\in W^{2,2}(\Omega;\mathbb R^d)\ |\;\; v=0\;\mbox{on } \Gamma_0,\;\; \varepsilon(v)=0\;\mbox{on } \Gamma_H\}.$$
The space $Y$ is equipped with the standard norm denoted by $\|.\|_Y$.
The set $\Lambda_\lambda$ remains the same as in \eqref{Lambda} and the set $P$ of plastically admissible stresses reads 
$$P:=\{(p,\pi,\Pi)\in X\ |\;\; \sqrt{|\pi|^2+\ell^{-2}|\Pi|^2}\leq \gamma\;\;\mbox{a.e. in }\Omega \}.$$ 
Thus, we can define the limit analysis problem as follows:
$$\lambda^*=\sup\{\lambda\in\mathbb R_+\ |\;\; P\cap \Lambda_\lambda\neq\emptyset\}=\sup_{(p,\pi,\Pi)\in P}\inf_{\substack{v\in Y\\ L(v)=1}}\ a((p,\pi,\Pi),v).$$

For analysis of the primal problem \eqref{dual_problem}, it is convenient to use the split $P=P_A+P_C$, where
$$P_A:=\{(p,\pi,\Pi)\in X\ |\;\; p=0,\;\;\sqrt{|\pi|^2+\ell^{-2}|\Pi|^2}\leq \gamma\;\;\mbox{a.e. in }\Omega \},$$
$$P_C:=\{(p,\pi,\Pi)\in X\ |\;\; \pi=0,\;\Pi=0 \}.$$
It is easy to check that $P_A$ is bounded in $X$ and $P_C$ is a closed linear subspace of $X$. We have
$$\zeta^*=\inf_{\substack{v\in Y\\ L(v)=1}}\ \sup_{(p,\pi,\Pi)\in P} a((p,\pi,\Pi),v)=\inf_{\substack{v\in Y\\ L(v)=1}}\mathcal J(v),$$
where
\begin{align*}
\mathcal J(v)&=\left\{\begin{array}{cc}
\int_\Omega \gamma\sqrt{|\varepsilon(v)|^2+\ell^2|\nabla \varepsilon(v)|^2}\,dx, & \mbox{if}\;\;\mathrm{div}\,v=0\;\mbox{in }\Omega,\\
+\infty, & \mbox{otherwise}.
\end{array}\right.
\end{align*}

The inf-sup term in \eqref{inf-sup_abstract} becomes 
\begin{equation}
\inf_{\substack{(p,\pi,\Pi)\in P_{C}\\ (p,\pi,\Pi)\neq0}}\ \sup_{\substack{v\in Y\\ v\neq0}}\ \frac{a((p,\pi,\Pi),v)}{\|(p,\pi,\Pi)\|_X\|v\|_Y} =\inf_{\substack{p\in L^2(\Omega)\\ p\neq0}}\ \sup_{\substack{v\in Y\\ v\neq0}}\ \frac{\int_\Omega p(\mathrm{div}\,v)\,dx}{\|p\|_2\|v\|_Y}.
\label{inf-sup1}
\end{equation}
For the equality $\lambda^*=\zeta^*$ to be satisfied it suffices to show that the right-hand side of \eqref{inf-sup1} is positive on an appropriate factor space of $L^2(\Omega)$. Such an analysis seems to be more involved and we leave this as a topic for further investigation.

\begin{remark}
		\emph{If we replace the yield functions $\varphi_\ell$ in \eqref{yield2} with $\varphi_{\ell,r}$ defined by \eqref{f_ell_r} then the set $P_C$ and the inf-sup expression \eqref{inf-sup1} remain the same. The corresponding functional $\mathcal J(v)$ is the same as in \eqref{J_r} for $\mathrm{div}\,v=0$.
		}
\end{remark}

\subsection{Limit analysis for a delamination problem}

The last example is devoted to a model for delamination, inspired by \cite{Baniotopoulos_Haslinger_2005}. Let $\Omega \subset \mathbb R^2$ denote the domain occupied by an elastic body, with boundary $\partial \Omega$. The body is a laminated composite, comprising two distinct materials. The geometry is idealized with one material, referred to as the bulk, comprising the entire domain with the exception of a thin layer of the second material. This thin layer is treated as a line $\Gamma_b \subset \Omega$, and separation or delamination may occur along this line. 

We follow \cite{Baniotopoulos_Haslinger_2005} and consider a problem with a symmetric geometry and loading, as shown in Figure \ref{domain+loading}(a). Zero displacements in the normal ($x_1$) direction are prescribed along the boundary $\Gamma_\ell$, while on $\Gamma_f$ a surface force $\lambda f$ is applied, where $\lambda\geq0$ is a load factor. The remainder of the boundary $\Gamma_t$ is unconstrained and traction-free. The surface force as well as a body force $\lambda F$ act symmetrically along the $x_1$ axis so that $F(x_1,x_2) = F(x_1,-x_2)$, the same applying to $f$.
\begin{figure}[!h]
\centering
\includegraphics[width=0.45\textwidth]{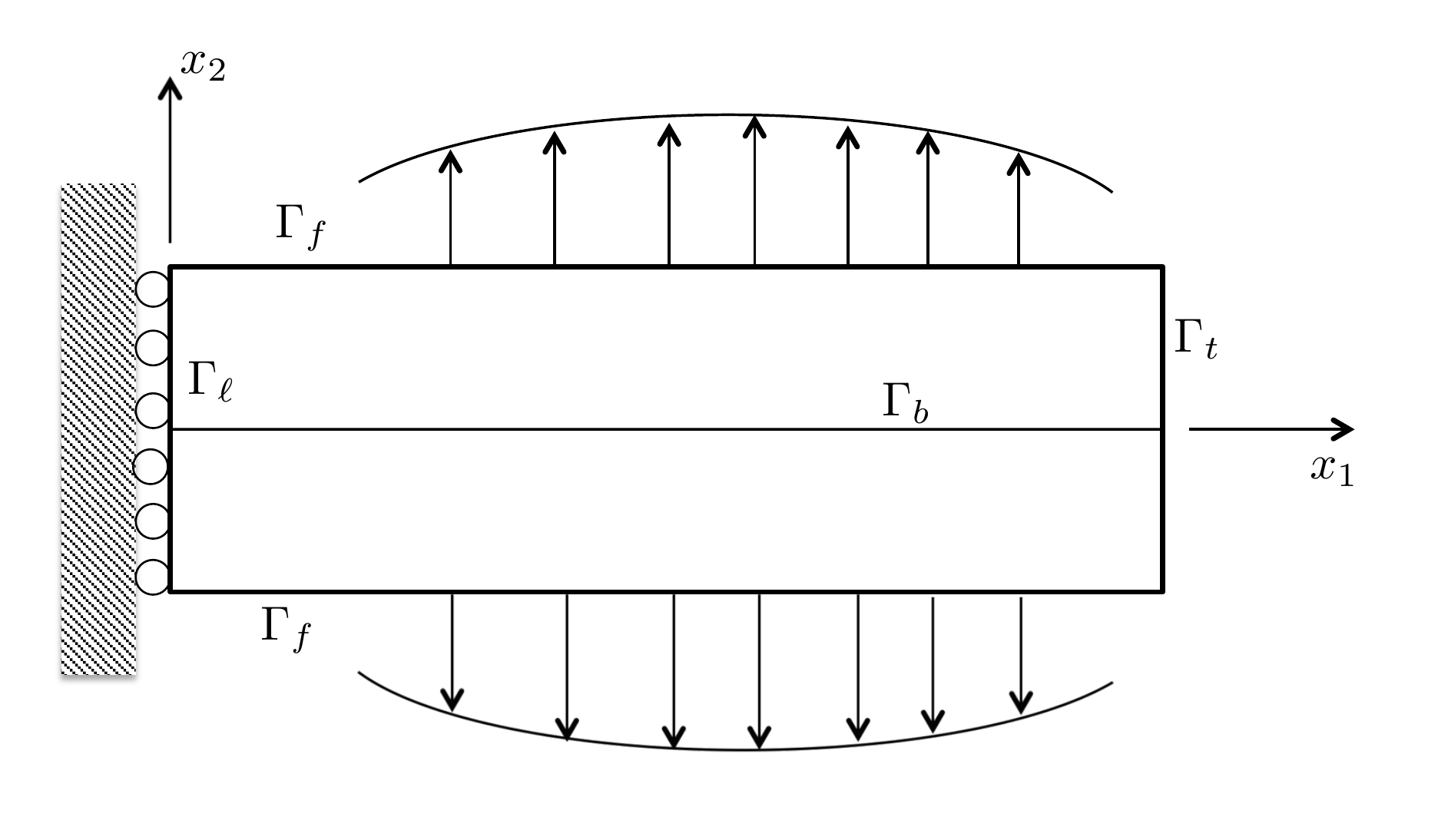} \hspace{6ex}
\includegraphics[width=0.45\textwidth]{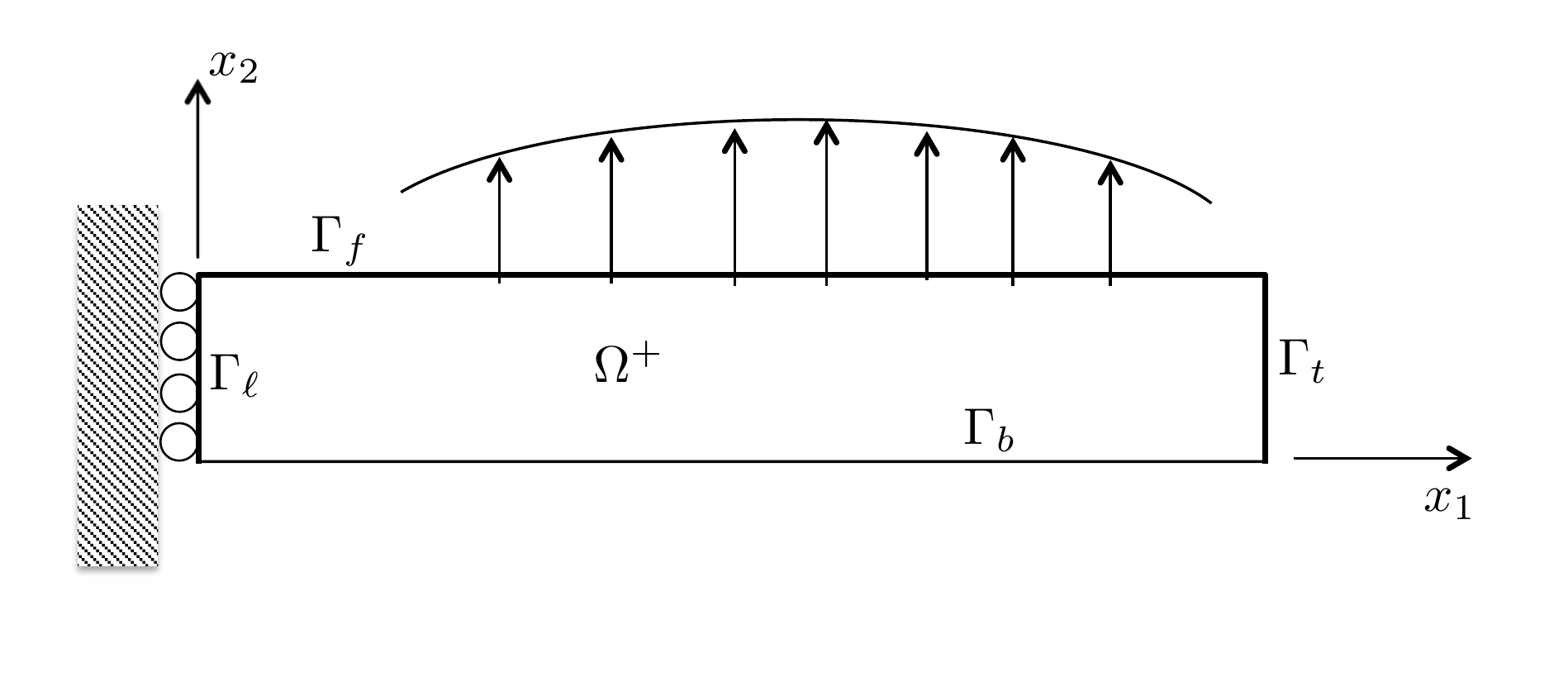}\vspace{3ex}\\
(a) \hspace{42ex} (b)
\caption{(a) Composite body showing domain and loading; (b) Upper half of symmetric body and loading}
\label{domain+loading}
\end{figure}
Given the symmetry of the problem we may confine attention to the upper half $\Omega^+$ of the domain, shown in Figure \ref{domain+loading}(b). 

The boundary conditions set out above have to be augmented with a condition along $\Gamma_b$. This takes the form of conditions on the traction vector $t = \sigma \nu$: from symmetry the tangential component $\sigma\nu\cdot\tau := \sigma_{12}$ must be zero. Here and henceforth subscripts $\nu$ and $\tau$ refer respectively to normal and tangential components. The condition in the normal direction is a constitutive relation that (in the original domain) gives the normal traction $\sigma\nu\cdot\nu := \sigma_{22}$ as a function of the separation $[u_2]$ between the upper and lower surfaces along $\Gamma_b$. Here $u_2$ is the displacement in the normal direction and $[u_2] = u^+_2 - u^-_2$ denotes the jump in displacement at the interface. For the symmetrized problem one may replace the jump $[u_2]$ by $2u_2^+ := 2u_2$.
This has to be supplemented by a non-interpenetration condition, which we do not impose for now, but return to later.

The boundary conditions on $\partial\Omega^+$ are then as follows:
\begin{equation}
\begin{array}{ll}
u_1 = 0,\; \sigma_{12}=0 & \quad\mbox{on}\ \Gamma_\ell, \\
\sigma\nu = \lambda f & \quad\mbox{on}\ \Gamma_f, \\
\sigma\nu = 0 & \quad\mbox{on}\ \Gamma_t, \\
\sigma_{12} = 0,\ \ 
\sigma_{22} (x_1) \in H(u_2(x_1)) & \quad \mbox{on}\ \Gamma_b,
\end{array}
\label{bcs}
\end{equation}
where $H$ denotes a multivalued step function in $\mathbb R^1$. Examples of $H$ can be found in \cite{Baniotopoulos_Haslinger_2005}. For purposes of this paper, we shall assume that the values of $H$ belong to the interval $[-\gamma,\gamma]$ where $\gamma > 0$ is a prescribed threshold for delamination. Then $H$ can be either the projection of $\mathbb R^1$ onto $[-\gamma,\gamma]$ or the multifunction
$H (x) = \gamma\,\mathrm{sign}\,x$ for $x\neq 0$ and $H(0) \in [-\gamma,\gamma]$.

The bulk material is modelled as linear elastic, to which we add the equilibrium equation on $\Omega^+$:
\begin{align}
\mathrm{div}\,\sigma + \lambda F = 0. 
\label{equil}
\end{align}

%
%
%

The limit load for the problem can be defined formally as follows: \textit{find the supremum $\lambda^*\geq0$ over all $\lambda\geq 0$ for which there exists a stress field $\sigma\colon\Omega^+\rightarrow \mathbb R^{2\times 2}_{sym}$ that satisfies \eqref{equil} and}
\begin{equation}
\begin{array}{ll}
\sigma_{12}=0 & \quad\mbox{on}\ \Gamma_\ell, \\
\sigma\nu = \lambda f & \quad\mbox{on}\ \Gamma_f, \\
\sigma\nu = 0 & \quad\mbox{on}\ \Gamma_t, \\
\sigma_{12} = 0,\ \ 
|\sigma_{22}|\leq\gamma & \quad \mbox{on}\ \Gamma_b.
\end{array}
\label{bcs2}
\end{equation}

To rewrite this problem in the form \eqref{primal_problem} and \eqref{dual_problem}, we introduce an auxiliary variable $\Xi\in L^2(\Gamma_b)$ that coincides with $-\sigma_{22}$ on $\Gamma_b$ in a weak sense. Then the space $X=L^2(\Omega^+,\mathbb R^{2\times 2}_{sym})\times L^2(\Gamma_b)$ contains pairs $(\sigma,\Xi)$ and  
$$Y:=\{v=(v_1,v_2)\in W^{1,2}(\Omega^+,\mathbb R^2)\ |\;\; v_1=0\;\mbox{on } \Gamma_\ell\}$$
consists of admissible displacement fields. Using the spaces $X$ and $Y$ one can rewrite the equations in \eqref{equil}--\eqref{bcs2} in the following weak form:
$$a((\sigma,\Xi),v)=\lambda L(v) \quad\forall v\in Y,$$
where
$$a((\sigma,\Xi),v)=\int_{\Omega^+} \sigma:\varepsilon(v)\,dx+\int_{\Gamma_b}\Xi v_2\,dx, \quad \varepsilon(v)=\frac{1}{2}[\nabla v+(\nabla v)^\top]$$
and
$$L(v)=\int_{\Omega^+} F\cdot v\,dx+\int_{\Gamma_f}f\cdot v\,ds,\quad v\in Y.$$
The set $P$ and its decomposition into $P_A$ and $P_C$ are defined as follows:
$$P:=\{(\sigma,\Xi)\in X\ |\;\; |\Xi|\leq\gamma\;\mbox{in } \Gamma_b\},\quad P_C:=\{(\sigma,\Xi)\in X\ |\;\; \Xi=0\;\mbox{on } \Gamma_b\},$$
$$P_A:=\{(\sigma,\Xi)\in X\ |\;\; \sigma=0\;\mbox{on }\Omega^+,\; |\Xi|\leq\gamma\;\mbox{on } \Gamma_b\}.$$ We also define $\Lambda_\lambda := \{ (\sigma,\Xi) \in X\ |\ a((\sigma,\Xi),v) = \lambda L(v)\ \ \forall v \in Y\}$. Then, the dual and primal problems read
\begin{equation}
\lambda^*=\sup\{\lambda\in\mathbb R_+\ |\;\; P\cap \Lambda_\lambda\neq\emptyset\}=\sup_{(\sigma,\Xi)\in P}\inf_{\substack{v\in Y\\ L(v)=1}}\ a((\sigma,\Xi),v)
\label{dual_delam}
\end{equation}
and 
\begin{equation}
\zeta^*=\inf_{\substack{v\in Y\\ L(v)=1}}\sup_{(\sigma,\Xi)\in P} a((\sigma,\Xi),v)=\inf_{\substack{v\in Y\\ L(v)=1}}\mathcal J(v).
\label{primal_delam}
\end{equation}

To show that $\lambda^*=\zeta^*$ we use Theorem \ref{theorem_duality6}. In particular, we have
$$H=\{(\sigma,\Xi)\in X\ |\;\; a((\sigma,\Xi),v)=0\;\;\forall v\in Y\}$$
and
$$P_C/H=\{(\sigma,\Xi)\in X\ |\;\;\exists v\in Y:\; \sigma=\varepsilon(v),\;\;\Xi=0\}.$$
The latter identity follows, for example, from \cite{NH17}. Then the inf-sup condition \eqref{inf-sup_abstract4} is a consequence of the Korn inequality \cite{NH17}. 

In addition, if $\lambda^*=\zeta^*<+\infty$ then one can find analytical solutions $v^*\in Y$ and $(\sigma^*,\Xi^*)\in P\cap\Lambda_{\lambda^*}$ to \eqref{primal_delam} and \eqref{dual_delam}, respectively. Indeed, from \eqref{J0}, \eqref{K} and Lemma \ref{lem_J}, it follows that
$$\mathcal J(v)=\left\{
\begin{array}{cl}
\int_{\Gamma_b}\gamma|v_2|\,dx,& v\in\mathcal K,\\
+\infty,& v\not\in\mathcal K,
\end{array}
\right.,\quad \mathcal K=\{v\in Y\ |\;\; v=(0,q),\; q\in\mathbb R\},$$
that is, $\mathrm{dom}\,\mathcal J=\mathcal K$. It is readily seen that the feasible set $\mathrm{dom}\,\mathcal J\cap \{v\in Y\ |\; L(v)=1\}$ in \eqref{primal_delam} is the singleton consisting of the function
$$v^*=(v_1^*, v_2^*),\quad v_1^*=0,\quad v_2^*=\left(\int_{\Omega^+} F_2\,dx+\int_{\Gamma_f}f_2\,ds\right)^{-1},$$
provided that $\int_{\Omega^+} F_2\,dx+\int_{\Gamma_f}f_2\,ds\neq0$. If it is so then $v^*$ is also the unique solution to the primal problem \eqref{primal_delam} and 
$$\lambda^*=\zeta^*=\gamma|\Gamma_b|\Big|\int_{\Omega^+} F_2\,dx+\int_{\Gamma_f}f_2\,ds\Big|^{-1}<+\infty.$$
By analysis of the saddle-point problem related to \eqref{primal_delam} and \eqref{dual_delam}, we find that
the solution $(\sigma^*,\Xi^*)$ to the dual problem \eqref{dual_delam} satisfies $\Xi^*=\gamma\mathrm{sign}(v_2^*)$ and 
\begin{equation}
\int_{\Omega^+} \sigma^*:\varepsilon(v)\,dx=\lambda^*L(v)-\int_{\Gamma_b}\Xi^*v_2\,ds\quad\forall v\in Y.
\label{sigma*}
\end{equation}
The component $\sigma^*$ is not uniquely defined. One of $\sigma^*$ satisfying \eqref{sigma*} is the elastic stress of the form $\sigma^*=\mathbb C\varepsilon(u^*)$ in $\Omega^+$, where $u^*\in Y$ and $\mathbb C$ is the elastic fourth order tensor representing Hooke's law. If $\int_{\Omega^+} F_2\,dx+\int_{\Gamma_f}f_2\,ds=0$ then $\lambda^*=\zeta^*=+\infty$.

\begin{remark}
	\emph{If we consider the case in which the body is fixed on $\Gamma_\ell$ as in \cite{Baniotopoulos_Haslinger_2005}, then $\mathcal K=\{0_Y\}$, which implies that $\lambda^*=\zeta^*=+\infty$. Thus the related delamination problem may have a solution even if the composite is completely debonded.}
\end{remark}

\begin{remark}
\emph{The complete formulation of the delamination problem requires also a condition of non-interpenetration (that is, a Signorini condition) along $\Gamma_b$. For the symmetrized problem this amounts to defining the conic set
$Y_C:=\{v\in Y\ |\;\; v_2\geq0 \;\mbox{on } \Gamma_b\}$ of admissible displacement fields, replacing the last of equations \eqref{bcs2} with
$$\sigma_{21}=0,\;\;-\sigma_{22}\in[0,\gamma] \quad \mbox{on}\;\Gamma_b, \label{Gamma_b2}$$
and consequently, replacing $P$ with
$P:=\{(\sigma,\Xi)\in X\ |\;\; \Xi\in[0,\gamma]\;\mbox{on } \Gamma_b\}$. According to Theorem \ref{theorem_contact}, we have the duality problem 
\begin{equation*}
\lambda^*=\sup_{x\in P}\inf_{\substack{y\in Y_C\\ L(y)=1}}\ a(x,y) \stackrel{?}{=}\inf_{\substack{y\in Y_C\\ L(y)=1}}\sup_{x\in P}\ a(x,y)=\zeta^*.
\label{duality_problem3}
\end{equation*}
By combining Theorems \ref{theorem_duality6} and \ref{theorem_contact} it is possible to show that $\lambda^*=\zeta^*$. In particular, if 
$$\int_{\Omega^+} F_2\,dx+\int_{\Gamma_f}f_2\,ds>0$$
we obtain the same limit value and the primal and dual solutions as for the duality problem without the non-penetration condition.}
\end{remark}

\section{Conclusion}

This work has been concerned with an inf-sup problem posed on abstract Banach spaces. The main feature of this convex and constrained problem has been the presence of a bilinear Lagrangian, which appears in applications leading to linear, cone or convex programming problems. Conditions for ensuring duality without any gap have been introduced. We have introduced and extended an innovative framework based on an inf-sup condition on convex cones generalizing the well-known Babu\v ska-Brezzi conditions. We have also suggested a new regularization method and derived a computable majorant to the problem.

Applications of the abstract problem to various examples in mechanics have been presented. First, the problem of limit analysis in classical plasticity has been revisited in the context of the duality framework of this work. Then, we have shown that the abstract framework may be used in the case of two different subproblems related to strain-gradient plasticity, viz. the determination of plastically admissible stresses and the determination of limit loads, and for a delamination problem.

The techniques presented in this paper could be extended to more general duality problems where the Lagrangian contains, in addition to the bilinear form, linear forms with respect to primal or dual variables. Such an extension would be applicable to a wider range of problems in mechanics.

{\bf Acknowledgment:} SS and JH acknowledge support for their work from the Czech Science Foundation (GA\v{C}R) through project No. 19-11441S. BDR acknowledges support for his work from the National Research Foundation, through the South African Chair in Computational Mechanics, SARChI Grant 47584.


\end{document}